\newcommand\bp{B}
\newcommand{\ds}{\displaystyle}
\newcommand{\om} {\Omega}
\newcommand{\ofm} {\Omega^{FM}}
\newcommand{\oal} {\Omega^C}
\newcommand{\con} {\mathcal Q}
\newcommand{\vac} {\ensuremath{\circ}}
\newcommand{\occ} {\ensuremath{\bullet}}
\newcommand{\be}{\begin{equation}}
\newcommand{\ee}{\end{equation}}
\newcommand{\bew}{\begin{equation*}}
\newcommand{\eew}{\end{equation*}}
\newcommand{\mlq}{multiline queue}
\newcommand{\mlqs}{multiline queues}
\newcommand{\refT}[1]{Theorem~\ref{#1}}
\newcommand{\refL}[1]{Lemma~\ref{#1}}
\newcommand{\refS}[1]{Section~\ref{#1}}
\newcommand{\refF}[1]{Figure~\ref{#1}}
\newenvironment{romenumerate}[1][0pt]{
\addtolength{\leftmargini}{#1}\begin{enumerate}
 }{\end{enumerate}}
\newtheorem{theorem}{Theorem}[section]
\newtheorem{lemma}[theorem]{Lemma}
\newtheorem{proposition}[theorem]{Proposition}
\newtheorem{algorithm}[theorem]{Algorithm}
\newtheorem{definition}[theorem]{Definition}
\newtheorem{remark}[theorem]{Remark}
\newtheorem{conjecture}[theorem]{Conjecture}
\begin{document}

\title[An Inhomogeneous Multispecies TASEP on a Ring]
{An Inhomogeneous Multispecies \\ TASEP on a Ring}

\author{Arvind Ayyer} 
\address[Arvind Ayyer]{Department of Mathematics, UC Davis, One Shields Ave., Davis, CA 95616-8633, U.S.A. \newline
New address: Department of Mathematics, Department of Mathematics, Indian Institute of Science, Bangalore - 560012, India.
}
\email{arvind@math.iisc.ernet.in}

\author{Svante Linusson} 
\thanks{Svante Linusson is a Royal Swedish Academy of Sciences Research Fellow supported by a grant from the Knut and Alice Wallenberg Foundation.}
\address[Svante Linusson]{Svante Linusson, Department of Mathematics \\ KTH-Royal Institute of Technology, 
  SE-100 44, Stockholm, Sweden.}
\email{linusson@math.kth.se}

\date{\today}  

\begin{abstract}
We reinterpret and generalize conjectures of Lam and Will\-iams  
as statements about the stationary distribution of a multispecies 
exclusion process on the ring. 
The central objects in our study are the multiline queues of
Ferrari and Martin.
We make some progress on some of the conjectures in different directions. 
First, we prove Lam and Williams' conjectures in two special cases by generalizing the
rates of the Ferrari-Martin transitions. Secondly, we define a new process on multiline queues, which have a certain minimality 
property. This gives another proof for one of the special cases; namely arbitrary jump rates for three species.
\end{abstract}

\maketitle

\section{Introduction} \label{S:Intro}

We study a totally asymmetric simple exclusion process (TASEP in short)  on the ring $\mathbb Z/N\mathbb Z$, which is a continuous time Markov chain in which each  position of the ring is occupied by exactly one particle of a certain class. The dynamical moves in the TASEP are that a particle can jump over (i.e. trade places with) a particle from a larger (i.e., higher numbered) class to its left. TASEPs have been extensively studied in the physics literature. We give some background on the TASEP in Section~\ref{S:backg}.

Lam \cite{lam} conjectured properties of this Markov chain which he needed for his work on infinite reduced words of affine Weyl groups. This TASEP has been studied independently by several authors and in particular it has been given a beautiful solution in terms of so-called multiline queues by Ferrari and Martin \cite{FM2}, which we describe in \refS{S:Multi}. These intricate objects give a solution to, and predate, one of Lam's conjectures about the partition function, but more work is needed to resolve others, in particular the stationary weight for the identity.

In \cite{lamwill} Lam and Williams generalized the model, but in a different language, by including different jump rates for different classes of particles, so particles of class $i$ jump with rate $x_i$. They obtained surprisingly nice stationary weights, which they conjectured to be polynomials in the jump rates with positive integer coefficients, see Conjecture \ref{C:LW}. The purpose of the present paper is to report some advances on these conjectures.

In particular, see \refS{S:FM3}, we solve this inhomogeneous TASEP with three different 
kinds of particles, \refT{T:FM3}, verifying this conjecture. In fact we do this in two different ways, first using the transitions for multiline queues by Ferrari and Martin and then by defining a new different Markov chain, see 
\refS{S:new}, which also projects down and gives a solution of the TASEP with three different classes of particles. We include this second proof because we have higher hopes that it may generalize to $n\ge 4$. We also prove in \refS{S:FM1} that the Markov chain
defined by Ferrari and Martin is enough to understand the 
power of $x_1$, setting $x_2=\dots=x_{n-1}$, for an arbitrary system with exactly one first class particle. The interested reader can verify the statements in the paper by downloading the Maple package \texttt{InhomTasep.maple} from either the homepage of one of the authors or the \texttt{arXiv} source. 

{\em Note added in proof:} Since the submission of the paper there has been progress and there are now claimed proofs of both the formula for the stationary weight of the identity by Aas \cite{Aas} and for our Conjecture \ref{C:main} (two very different ways by Arita-Malik \cite{AM} and Linusson-Martin \cite{LM}).

\subsection{Background on TASEP} \label{S:backg}
The general exclusion process can be defined on an arbitrary graph. One starts with a configuration of particles on the vertices of the graph, where every vertex can be occupied by at most one particle. The process involved hopping of the particles according to a Markovian rule, and the exclusion condition refers to the fact that at most one particle can be at any site.

The exclusion process was probably first studied in the biological literature. It was proposed as a prototype to describe the dynamics of ribosomes along RNA \cite{MGP}. Exclusion processes were studied systematically by probabilists in the 70s starting with the work of Spitzer \cite{Spitzer}, who coined the term. A lot of results are now known about the exclusion process on $\mathbb Z$ \cite{Lig2}.

There were two starting points for a more
 combinatorial understanding of exclusion processes on a finite state space, one for the TASEP \cite{DS} by Duchi and Schaeffer, and one for the PASEP \cite{CW1} by Corteel and Williams.
 There are many subsequent papers relating these to well-known combinatorial constructs. Recently, this approach has led to the first combinatorial interpretation of the moments of the Askey-Wilson polynomials \cite{CW2}.

Motivated by questions in statistical physics, Derrida, Janowsky, Lebowitz and Speer \cite{DJLS} obtained the stationary distribution for the TASEP on
$\mathbb Z/N\mathbb Z$ with two species of particles, first-class and second-class, in addition to vacancies using the {\em Matrix Ansatz} technique. 
The general solution of the stationary distribution for any number of different classes of particles  came from Ferrari and Martin \cite{FM2}, who built on their own work on multiline queues \cite{FM1} and used ideas from  Ferrari, Fontes and Kohayakawa \cite{FFK} and Angel \cite{A}. Building on this work, the matrix ansatz solution for the general TASEP
was constructed in \cite{EFM}, and for the general PASEP in \cite{PEM}. For a general review of the matrix ansatz, see \cite{BE}. We note that the inhomogeneous TASEP has also been studied in relation to models of vehicular traffic, see \cite{LPK}.

The one-dimensional totally asymmetric exclusion process arose again, using very different terminology, in the study of random walks in Weyl alcoves by Lam \cite{lam}. He managed to prove that some results about the infinite Markov chain could be obtained by studying the stationary distribution of a finite one on permutations. This finite Markov chain turns out to be equivalent to the multispecies TASEP on a ring discussed above with one particle of each class. Further work along this direction led to more conjectures by Lam and Williams \cite{lamwill}, in particular the positivity conjecture addressed in this paper.

\section*{Acknowledgements}
We thank Thomas Lam and Lauren Williams  for valuable discussions. 
The authors would like to thank the MSRI, where this research was conducted, for their support and hospitality. 
We also thank Mireille  Bousquet-M\'elou and an anonymous referee for comments on the manuscript.

\section{Multispecies Exclusion Processes and Multiline Queues}\label{S:Multi}
The aim of this section is to define the quantities in the title, and explain
how they relate to one another.

Before we define these processes, we quickly recall the part of the theory of Markov chains relevant to us.
The continuous-time Markov chain on a (finite) state space $\om$ is defined by the so-called {\bf transition matrix} or $M$, whose rows and columns are labeled by elements of $\om$. 
For $\tau_{1},\tau_{2} \in \om$, we take the convention that the $(\tau_{1},\tau_{2})$ entry is the
rate of the transition from  $\tau_{2} \to \tau_{1}$ if $\tau_{1} \neq \tau_{2}$. 
The diagonal entries $(\tau_{1},\tau_{1})$ is the negative of the sum of the rates of 
transitions leaving $\tau_{1}$, not counting loops. 
\bew
(M)_{\tau_{1},\tau_{2}} = \begin{cases}
\text{rate}(\tau_{2} \to \tau_{1}), & \tau_{1} \neq \tau_{2} \\
\ds -\sum_{\tau \in \om\setminus\{\tau_1\}} \text{rate}(\tau_{1} \to \tau), & \tau_{1} = \tau_{2}.
\end{cases}
\eew

This ensures that column sums are zero and consequently, zero is an eigenvalue with row (left-) eigenvector being the all-ones vector.
Assuming that the Markov chain is {\bf irreducible}, it has a unique {\bf stationary distribution} 
\cite{levperwil}. 
This is given by the entries of the corresponding column (right-) eigenvector $w$ with eigenvalue zero.

The stationary weight vector $w$ is thus determined by solving the equation $M.w=0$. This can be written in the following way. Let $w(\tau)$ be the stationary distribution of configuration $\tau$, which can be viewed as the entry of the vector $w$ at position $\tau$. Then
\be \label{mastereq}
\sum_{\tau' \in \om\setminus\{\tau\}} \text{rate}(\tau' \to \tau) w(\tau') = \sum_{\tau' \in \om\setminus\{\tau\}} \text{rate}(\tau \to \tau') w(\tau).
\ee

This is the all-important {\bf master equation} for $\tau$. We will use the uniqueness (up to an overall scaling factor) of the solution of these 
equations to prove all our results about stationary distributions. For an incoming transition from $\tau' \to \tau$, we will use the term {\bf effective rate} 
to mean 
\be \label{effrate}
\text{rate}(\tau' \to \tau) \frac{w(\tau')}{w(\tau)}.
\ee 
The master equation \eqref{mastereq} is then equivalent to that the outgoing rates sum up to the same thing as the incoming effective rates.

\subsection{Multitype TASEP on a ring}
The state space of the $n$-species exclusion process $\om_{m}$ is defined by an 
$n$-tuple $m = (m_{1},\dots,m_{n})$ of positive integers where $m_{1}+\cdots+m_{n}=N$.
The configurations $\pi \in \{1,\dots,n\}^{N}$ are those $N$-tuples with $m_{1}$ number of 1's, $m_{2}$ number of 2's, and so on. More precisely,
\be \label{TASEPconf}
\om_{(m_{1},\dots,m_{n})} = \{(\pi_{1},\dots,\pi_{N}) \;\; | \;\;
\# \{i| \pi_{i} = j \} = m_{j} \text{ for $j = 1,\dots,n$.} 
\}
\ee
Clearly,
\bew
|\om_{m}| = \binom N{m_{1},\dots,m_{n}} \nonumber.
\eew
$\pi_{i}$ denotes the  {\bf class} or equivalently, the {\bf species} of the particle at the $i$th site. We will use the terms `class' and `species' interchangeably.
Each species can be thought of as a positive integer, which wants to go to the left  and lower integers take precedence over higher ones. This might seem counterintuitive, but this notation makes sense if we think of first-class particles (of class 1) being superior to second-class particles (of class 2). 

\begin{definition}
The {\bf multispecies exclusion process} on $\om_{m}$ is defined by local transitions
involving sites $i$ and $i+1 \pmod N$. If the current state is $\pi$, with $\pi_{i}=a$, 
$\pi_{i+1}=b$, then $a$ and $b$ can interchange positions, namely
\be \label{rates-hom}
a\;b \to b\;a \text{ with rate 1 if } a>b.\\
\ee
\end{definition}

In words, each particle carries an exponential clock which rings with rate 1, and the particle tries to jump to its left whenever the clock rings. 
If the particle to the left is of less importance the jump takes place.
Otherwise, the configuration is unchanged. 

Therefore 1's always move left and $n$'s always move right. Particles whose class is between 
1 and $n$  sometimes move left, and sometimes right.
We consider this to be a homogeneous model because the rate of all transitions is 1. The model of Lam and Williams \cite{lamwill} on permutations, 
on the other hand, is inhomogeneous because transition rates depend on the particles being interchanged. 
The careful reader will notice that our Markov chain is different from theirs 
because we interchange neighboring positions whereas they interchange neighboring values. In the theory of permutation groups, this is just the difference between multiplying with simple transpositions on the right or on the left, so there is an easy bijection between the permutations in two 
chains. In particular, the (multi-)set of stationary weights are given by the same expressions.

We now generalize their model to multipermutations, where the rate of the exponential clock attached to particle of species $j$ is $x_{j}$. They are to be thought of as positive numbers, but we treat them as formal parameters.

\begin{definition}\label{D:trans}
The {\bf inhomogeneous multispecies exclusion process} on $\om_{m}$ is defined by local 
transitions involving sites $i$ and $i+1 \pmod N$. If the current state is $\pi$, 
with $\pi_{i}=a$, $\pi_{i+1}=b$, then $a$ and $b$ can interchange positions, namely
\be \label{rates}
a\;b \to b\;a \text{ with rate $x_{b}$ if } a>b.\\
\ee
\end{definition}

We give the $m=(1,1,1)$ example in detail now. The graph of the Markov chain and the stationary weights of each configuration are given in 
Figure~\ref{F:permeg}.
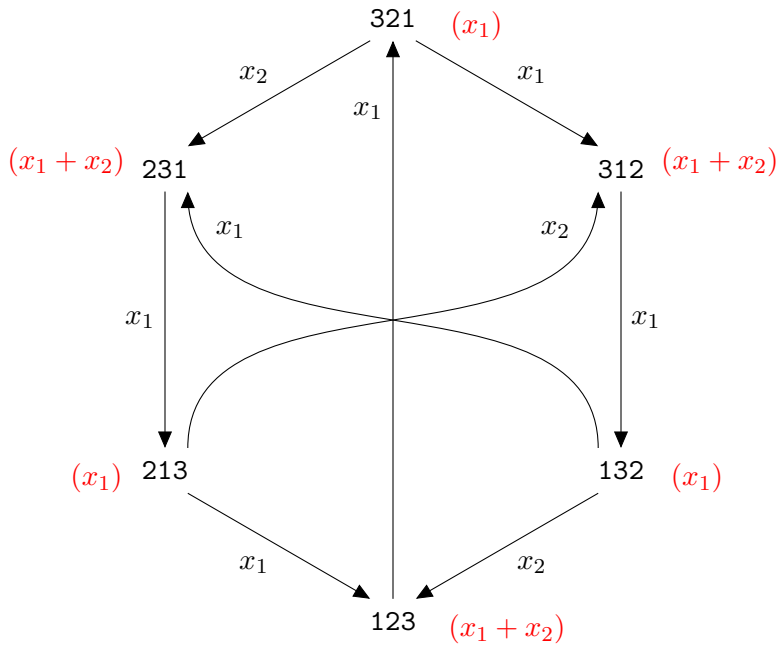
\begin{figure}[ht]
\begin{center}
\begin{tikzpicture} [>=triangle 45]
\draw (3,8) node {\verb!321!};
\draw (4.1,7.9) node [color=red] {$(x_{1})$};
\draw (0,6) node {\verb!231!};
\draw (-1.3,6.1) node [color=red] {$(x_{1}+x_{2})$};
\draw (6,6) node {\verb!312!};
\draw (7.3,6.1) node [color=red] {$(x_{1}+x_{2})$};
\draw (0,2) node {\verb!213!};
\draw (-0.9,1.9) node [color=red] {$(x_{1})$};
\draw (6,2) node {\verb!132!};
\draw (7.0,1.9) node [color=red] {$(x_{1})$};
\draw (3,0) node {\verb!123!};
\draw (4.5,-0.1) node [color=red] {$(x_{1}+x_{2})$};
\draw [->] (2.7,7.7) -- node [auto,swap] {$x_{2}$} (0.3,6.3);
\draw [->] (3.3,7.7) -- node [auto] {$x_{1}$} (5.7,6.3);
\draw [->] (0,5.7) -- node [auto,swap] {$x_{1}$} (0,2.3);
\draw [<-] (2.7,0.3) -- node [auto] {$x_{1}$} (0.3,1.7);
\draw [<-] (3.3,0.3) -- node [auto,swap] {$x_{2}$} (5.7,1.7);
\draw [<-] (6,2.3) -- node [auto,swap] {$x_{1}$} (6,5.7);
\draw [->] (3,0.3) -- node [auto,very near end] {$x_{1}$} (3,7.7);
\draw [->] (5.7,2.3) to [out = 90, in = -90] node [auto,very near end,swap] {$x_{1}$}(0.3,5.7);
\draw [->] (0.3,2.3) to [out = 90, in = -90] node [auto,very near end] {$x_{2}$}(5.7,5.7);
\end{tikzpicture}
\end{center}
\caption{Transitions of the Markov chain for $n=3$. The stationary weights are given in parenthesis and in red.}
\label{F:permeg}
\end{figure}
The transition matrix  in the lexicographically ordered basis, 
$\{123, 132, 213, 231, 312, 321 \}$ is given by
\bew
\begin{pmatrix}
-x_{1} & x_{2} & x_{1} & 0 & 0 & 0 \\
0 & -x_{1}-x_{2} & 0 & 0 & x_{1} & 0 \\
0 & 0 & -x_{1}-x_{2} & x_{1} & 0 & 0 \\
0 & x_{1} & 0 & -x_{1} & 0 & x_{2} \\
0 & 0 & x_{2} & 0 & -x_{1} & x_{1} \\
x_{1} & 0 & 0 & 0 & 0 & -x_{1}-x_{2}
\end{pmatrix}
\eew
Notice that permutations which are obtained by rotations of one another have the same stationary
weights. This is true in general because the transition rates defined in \eqref{rates} depend only on the particle classes, not on their positions.

\subsection{Ferrari-Martin multiline queues}
We now define the multiline que\-ues of Ferrari and Martin \cite{FM2}.
As before, let $m$ be the type of the process, with $n$ species and $N$ sites. 
Then the state space of the process, $\ofm_{m}$ is defined on a cylinder 
of circumference $N$ and height $n-1$, each site of which is either occupied by a $\occ$ or a $\vac$.
We think of $\occ$ as being occupied and $\vac$ as being vacant.
It will be convenient to set $M_{r} = \sum_{i=1}^{r} m_{i}$. 
At row $r$, for $1 \leq r <n$, the configuration will contain 
$M_{r}$ $\occ$'s and $N-M_{r}$ $\vac$'s. The rows are numbered from the top and the indices are given with 
row first, column second. 
At each row, all possible configurations with the prescribed number of $\occ$'s and $\vac$'s are allowed.
\be \label{queueconf}
\ofm_{(m_{1},\dots,m_{n})} = \left\{(\con_{r,i})_{1,1}^{n-1,N} \;\; \Big| \;\;
\substack{\ds \con_{r,i} \in \{\vac,\occ\},  \\
\\
\ds \#\{i| \con_{r,i} = \occ \} = M_{r} \text{ for $r \in \{1,\dots,n-1\}$.}}
\right\}
\ee
Since configurations at different rows can be chosen independently,
\[
|\ofm_{m}| = \prod_{r=1}^{n-1} \binom N{M_{r}}.
\]

To describe the transitions of this new Markov chain, we need a notion defined
in \cite{FM1}, which we will give a new name. 

\begin{definition} \label{D:ringingpath}
Given a configuration $\con \in \ofm_{m}$, the {\bf ringing path} 
$P^{(i)}=(P^{(i)}_1,\dots ,P^{(i)}_{n-1})$ is defined as follows: 
$P^{(i)}_{n-1}=i$ and 
\bew
P^{(i)}_{r-1} = \begin{cases}
P^{(i)}_{r}, & \text{if }\con_{r,P^{(i)}_{r}} = \occ, \\
P^{(i)}_{r}+1, \pmod N & \text{if } \con_{r,P^{(i)}_{r}} = \vac.
\end{cases}
\eew
There is a possible transition between $P^{(i)}_{r}-1$ and $P^{(i)}_{r}$ simultaneously for every row $r$. If we set $a=\con_{r,P^{(i)}_{r}-1}$ and $b=\con_{r,P^{(i)}_{r}}$, then $a\;b \to b\;a$ if and only if $a=\vac$ and $b=\occ$.
This is called a {\bf ringing path transition} at position $i$. 
\end{definition}

We illustrate this idea with an example in $\ofm_{(1,1,2,2,2)}$ in Figure~\ref{F:clockseq}
(ignore the numbers at the bottom for the moment).
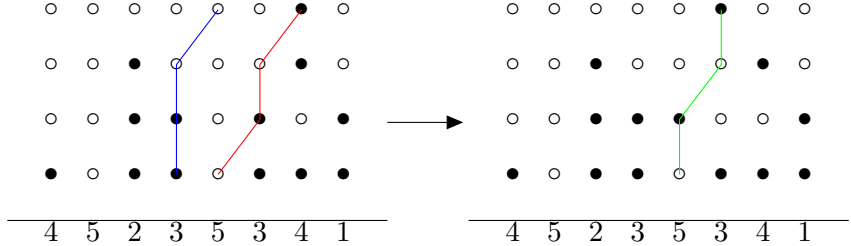
\begin{figure}[h!]
\begin{center}
\begin{tikzpicture}[>=triangle 45]
\matrix [column sep=0.1cm, row sep = 0.3cm] 
{
\node {$\vac$}; & \node {$\vac$}; & \node {$\vac$}; & \node {$\vac$}; & 
\node(b4) {$\vac$}; & \node {$\vac$}; & \node(a4){$\occ$}; & \node {$\vac$}; \\ 
\node {$\vac$}; & \node {$\vac$}; & \node {$\occ$}; & \node(b3) {$\vac$}; & 
\node {$\vac$}; & \node(a3) {$\vac$}; & \node {$\occ$}; & \node {$\vac$}; \\ 
\node {$\vac$}; & \node {$\vac$}; & \node {$\occ$}; & \node(b2) {$\occ$}; & 
\node {$\vac$}; & \node(a2) {$\occ$}; & \node {$\vac$}; & \node {$\occ$}; \\ 
\node {$\occ$}; & \node {$\vac$}; & \node {$\occ$}; & \node(b1) {$\occ$}; & 
\node(a1) {$\vac$}; & \node {$\occ$}; & \node {$\occ$}; & \node {$\occ$}; \\ 
\node {$4$}; & \node {$5$}; & \node {$2$}; & \node {$3$}; & 
\node {$5$}; & \node {$3$}; & \node {$4$}; & \node {$1$}; \\ 
}; 
\draw (-2.5,-1.3) -- (2.5,-1.3);
\draw [-,red] (a1.center) --  (a2.center) -- (a3.center) -- (a4.center);
\draw [-,blue] (b1.center) --  (b2.center) -- (b3.center) -- (b4.center);
\draw [->] (2.5,0) -- (3.5,0);
\end{tikzpicture}
\hspace{-0.2cm}
\begin{tikzpicture}[>=triangle 45]
\matrix [column sep=0.1cm, row sep = 0.3cm] 
{
\node {$\vac$}; & \node {$\vac$}; & \node {$\vac$}; & \node {$\vac$}; & 
\node {$\vac$}; & \node (c4){$\occ$}; & \node {$\vac$}; & \node {$\vac$}; \\ 
\node {$\vac$}; & \node {$\vac$}; & \node {$\occ$}; & \node {$\vac$}; & 
\node {$\vac$}; & \node (c3){$\vac$}; & \node {$\occ$}; & \node {$\vac$}; \\ 
\node {$\vac$}; & \node {$\vac$}; & \node {$\occ$}; & \node {$\occ$}; & 
\node (c2){$\occ$}; & \node {$\vac$}; & \node {$\vac$}; & \node {$\occ$}; \\ 
\node {$\occ$}; & \node {$\vac$}; & \node {$\occ$}; & \node {$\occ$}; & 
\node (c1){$\vac$}; & \node {$\occ$}; & \node {$\occ$}; & \node {$\occ$}; \\ 
\node {$4$}; & \node {$5$}; & \node {$2$}; & \node {$3$}; & 
\node {$5$}; & \node {$3$}; & \node {$4$}; & \node {$1$}; \\ 
}; 
\draw (-2.5,-1.3) -- (2.5,-1.3);
\draw [-,green] (c1.center) --  (c2.center) -- (c3.center) -- (c4.center);
\end{tikzpicture}
\caption{The clock rings at site 5, causing the red ringing path transition $P^{(5)} = (7,6,6,5)$, causing the resulting green path. If the clock rang at site 4 instead, then the blue ringing path $P^{(4)}=(6,5,4,4)$ would cause no transition.}
\label{F:clockseq}
\end{center}
\end{figure}

We are now in a position to define the Markov chain on the queues of \eqref{queueconf}.
\begin{definition}
The {\bf Ferrari-Martin multiline process} is the Markov chain on $\ofm_{m}$ where the dynamics occurs by ringing path transitions that take place via exponential clocks of rate 1 at all sites on row $n-1$. 
\end{definition}

In the following we summarize some facts about the Markov chain $\ofm_m$ proved in \cite [Prop 3.3, Theorem 3.1, Prop 3.2]{FM2}.

\begin{theorem}[Ferrari and Martin, \cite{FM2}] 
\label{T:FMringing}
\hspace*{1cm}

\begin{enumerate}
\item The ringing path transitions have an inverse given by starting a ringing path similar to Definition~\ref{D:ringingpath} at each position starting at the first row and moving in the opposite direction. This implies in particular that the number of incoming and outgoing transitions is the same for every state in $\ofm_m$.

\item One can get from any multiline queue to any other multiline queue using the ringing path transitions.

\item The (unique) stationary distribution of the Ferrari-Martin multiline process is the uniform distribution.

\item Each row of the multiline queue is a standard exclusion process with $n=2$, where $\occ$'s behave like 1s and $\vac$'s behave like 2s.
\end{enumerate}
\end{theorem}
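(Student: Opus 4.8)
The four assertions are of rather different character, so I would prove them in the order (1), (3), (2), (4), since (3) follows almost formally once (1) is established. For (1) the plan is to present each forward ringing-path transition, at a fixed position $i$, as a genuine \emph{bijection} $F_i$ of the state space $\ofm_m$ onto itself (reading a swap-free transition as the identity), whose inverse is the backward ringing-path map announced in the statement. First I would write the backward path down precisely: starting at row $1$ and descending, it moves straight down over an $\occ$ and one column to the \emph{left} over a $\vac$, undoing each swap $\occ\,\vac\to\vac\,\occ$ that the forward path performed. The key step is to check that applying the backward construction to $F_i(\con)$ retraces exactly the columns $P^{(i)}_r$ that the forward path visited in $\con$, and hence reverses every swap; this is a row-by-row induction in which one verifies that a forward swap at a given bond does not disturb the local data the backward path uses to relocate that same bond. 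Once $F_i$ is a bijection, the in-degree/out-degree balance is immediate: the unique $F_i$-preimage of a state $\con$ differs from $\con$ exactly when $F_i(\con)\neq\con$, so summing over $i$ equates the incoming and outgoing rates. This balance gives (3) for free: with all clock rates equal to $1$, the master equation \eqref{mastereq} for the uniform vector $w\equiv 1$ reduces to exactly the equality of total incoming and outgoing rates at each state, so the uniform distribution is stationary, and irreducibility (part (2)) upgrades this to uniqueness.

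For (2) I would show that every configuration can be driven to one fixed reference configuration, after which the invertibility from (1) supplies the return paths and strong connectivity follows. Since the bottom row $n-1$ carries one independent rate-$1$ clock per bond (because $P^{(i)}_{n-1}=i$) and is therefore an ordinary two-species TASEP on the ring, a well-known irreducible chain, I can first bring that row to any prescribed configuration; I would then argue that, with the lower rows suitably positioned, the freedom to choose which clock rings lets me sort each higher row in turn. The point requiring care is the coupling between rows, since a single ringing path touches every row at once, so the argument must show that the net effect on the targeted row can be realized while returning the remaining rows to where they started, using that each row run on its own is a recurrent exclusion process.

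For (4) the cleanest half is again the bottom row: because $P^{(i)}_{n-1}=i$, the clock at site $i$ acts precisely on the bond $(i-1,i)$ at rate $1$ and performs $\vac\,\occ\to\occ\,\vac$, which is verbatim the two-species TASEP under $\occ\leftrightarrow 1$ and $\vac\leftrightarrow 2$. For an arbitrary row I would first note that every ringing-path transition, restricted to that row, is either inert or a single swap $\vac\,\occ\to\occ\,\vac$, so the row-marginal only ever makes TASEP moves; and since the state space \eqref{queueconf} is a product over rows, the uniform stationary law of (3) marginalizes to the uniform law on each row, which is exactly the stationary law of the two-species TASEP. I expect the genuine obstacle of the whole theorem to live in two places: the precise bijective verification underlying (1), which is the combinatorial heart, and, within (4), the identification of the induced dynamics on an \emph{interior} row, where the arrival column $P^{(i)}_r$ depends on all the rows below it; it is exactly here that the ringing-path bookkeeping from (1) must be invoked to pin down the marginal.
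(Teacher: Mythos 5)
Your proposal cannot be compared against a proof in the paper, because the paper gives none: \refT{T:FMringing} is quoted as known, with a citation to \cite{FM2} in place of an argument. Judged on its own merits, your plan has a genuine error at its core. Part (1) rests on the claim that, for a \emph{fixed} position $i$, the ringing-path transition $F_i$ is a bijection of $\ofm_m$ onto itself with the backward path as inverse. This is false, already for $n=2$ (a single row), where $F_i$ swaps the bond $(i-1,i)$ if it reads $\vac\,\occ$ and otherwise does nothing: a state with $\occ\,\vac$ on that bond has \emph{two} $F_i$-preimages (itself, via the inert transition, and the state with that bond swapped), while a state with $\vac\,\occ$ there has \emph{none}. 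So $F_i$ is neither injective nor surjective, and the step ``the unique $F_i$-preimage of $\con$ differs from $\con$ exactly when $F_i(\con)\neq\con$'' has no meaning; the degree-balance conclusion you draw from it collapses. What is actually true (and is the content of the Ferrari--Martin proposition) is that the inverse of the forward transition started at bottom position $i$ is a backward transition started at a top-row position $j$ that depends on both $i$ and the state (the column where the forward path exits at the top). The bookkeeping must therefore be organized not as invertibility of each $F_i$, but as a pairing, for each fixed target state $\tau$, between the pairs $(\sigma,i)$ with $F_i(\sigma)=\tau$ and the $N$ top-row starting positions $j$ (via $\sigma=B_j(\tau)$), with inert transitions pairing to inert ones. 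This gives $\#\{(\sigma,i):F_i(\sigma)=\tau\}=N$ for every $\tau$, whence both sides of the master equation \eqref{mastereq} for the uniform vector equal $N$ minus the number of inert positions at $\tau$. Your ``retrace the columns $P^{(i)}_r$'' induction is the right local computation, but it proves a statement about paths, not about the maps $F_i$, and the distinction is exactly where the difficulty lives.

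There are also two unresolved (and partly acknowledged) gaps. In (4), what your argument yields --- each row only ever makes moves of TASEP type, and its stationary marginal is uniform --- is strictly weaker than the cited statement, and the stronger naive reading (that the induced dynamics on a row is itself a Markov two-species TASEP, i.e.\ a lumping) is \emph{false} for every row except the bottom one. Concretely, with $N=3$, $n=3$, top row $\vac\,\occ\,\vac$: the top-row particle jumps at total rate $2$ when the bottom row is $\vac\,\occ\,\occ$ (clocks at sites $1$ and $2$ both target the same top bond), at rate $1$ when it is $\occ\,\occ\,\vac$, and at rate $0$ when it is $\occ\,\vac\,\occ$. Only on average, in equilibrium, is the rate $1$; the correct statement is a Burke-type equilibrium theorem, which your marginal-distribution argument does not reach. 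Relatedly, in (2) your plan to sort the higher rows ``using that each row run on its own is a recurrent exclusion process'' is undermined by the same phenomenon: interior rows are not autonomous, so one must instead exhibit explicit ringing sequences driving any state to a canonical one and then use the forward/backward conjugation from (1) to reverse reachability. As it stands, (1) is wrong as formulated, and (2) and (4) are incomplete.
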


\subsection{Bully-path projection}
There is a formal notion of projection of Markov chains called {\bf lumpability} \cite[Lemma 2.5]{levperwil}, which we state for completeness. Let $\om$ be the state space of a Markov chain with transition matrix $M$. If $\om$ can be partitioned into equivalence classes, denoted $[\cdot]$, so that
\(
M(x,[y]) = M(x',[y])
\)
whenever $x' \in [x]$, then the resulting process on the equivalence classes is also a Markov chain. Then $\om$ is said to be lumpable with respect to the equivalence relation. 

\begin{algorithm} \label{A:bullypath}
The {\bf bully-path projection} is a map $\bp :\ofm_{m} \to \om_{m}$.
 Given a multiline queue $\con \in \ofm_{m}$ we will assign a class to all $\occ$'s in $\con$ inductively by the following procedure.

\begin{enumerate}
\item All the $\occ$'s in the first row ($r=1$) are defined to be particles of class 1.
\item Assume we have classified $m_k$ $\occ$'s as class $k$ for $1\le k\le r$ on row $r$ and want to do the same for row $r+1$. 

\item First take a $\occ$ of class 1 on position $(r,i)$ and look at $(r+1,i)$, the site below. If it is occupied, then that $\occ$ is defined to be of class 1. 
If $(r+1,i)$ is vacant, let $(r+1,i')$  be the first non-vacant position to the right$\pmod N$. The $\occ$ on that position is then
classified as first class. We say that the $\occ$ (the
particle) is {\bf queueing} during the positions $(r+1,i), (r+1,i+1),\dots (r+1,i')$ until it encounters an occupied site (which is called a service 
opportunity in \cite{FM2}). We also think pictorially of a path from $(r,i)$ down to 
$(r+1,i)$ and then rightwards all the way to $(r+1,i')$. 

\item The same process is repeated with all other $\occ$'s of class 1 with the additional rule that already classified 
$\occ$'s are ignored, i.e. the particle is queueing until it finds a non-classified $\occ$. The same procedure is then repeated for all 
$\occ$'s on row $r$ making sure that we assign first all of class $k$ before those of class $k+1$. 

\item When we have done this for all $\occ$'s on row 
$r$ there will be $m_{r+1}>0$ non-classified occupied positions left on row $r+1$ and they are all given class $r+1$. 

\item This procedure is continued to the bottom row ($r=n-1$) where a state of $\om_m$ is obtained if we assign the vacant positions as particles of class $n$.
\end{enumerate}
\end{algorithm}

\begin{theorem}[Ferrari and Martin, \cite{FM2}] \label{T:FMbully} \hspace{1cm}
\begin{enumerate}
\item The bully-path projection procedure in Algorithm~\ref{A:bullypath} is well-defined in the sense that the outcome does not depend on the order in which the particles within a class are classified.

\item The bully-path projection lumps the Ferrari-Martin multiline process on $\ofm_m$
to the  multispecies exclusion process on $\om_{m}$. Moreover, 
if we set $m'=(m_{1},\dots,m_{r}, N-M_r)$,
the bully-path projection up until row $r$ results in the lumping of the Ferrari-Martin multiline process on $\ofm_{m'}$ to the multispecies exclusion process $\om_{m'}$.

\item If the ringing path transition at position $i$ of $\con\in \ofm_m$ results in $\con'$, then $\bp(\con')$ is obtained from 
$\bp(\con)$ by the $\om_m$-transition at sites $i-1$ and $i$ as in Definition \ref{D:trans}.
\end{enumerate}
\end{theorem}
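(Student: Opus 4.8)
The plan is to prove the three parts in the order (1), (3), (2): part (1) is a self-contained combinatorial fact, part (3) is the engine, and part (2) follows from (3) by a rate count.

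\textbf{Part (1).} I would first note that when the procedure of Algorithm~\ref{A:bullypath} processes the $\occ$'s of a single class $k$ on row $r$, the only thing eventually recorded is \emph{which} occupied sites of row $r+1$ are left unclaimed (these become class $r+1$ once classes $1,\dots,r$ are exhausted); since every server processed in this round carries the same class $k$, the pairing of servers to service opportunities is irrelevant and only the \emph{set} of claimed opportunities matters. It therefore suffices to show: for a fixed set of servers on row $r$ and a fixed set of available opportunities on row $r+1$, the set of opportunities claimed under the rule ``queue rightward $\pmod N$ to the first free opportunity'' is independent of the processing order. As every order is a product of adjacent transpositions, I reduce to swapping two consecutively processed servers and check by a short case analysis: their queueing arcs on the cylinder are either disjoint, in which case nothing changes, or they overlap, in which case the two opportunities claimed form the same unordered pair in either order. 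The slack $M_{r+1}-M_{r}\ge 1$ guarantees that at every stage at least one opportunity remains unclaimed, so no server queues all the way around the cylinder and the arcs behave as on the line. Applying this round by round for $k=1,2,\dots$, each starting from the set left well-defined by the previous rounds, gives well-definedness.

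\textbf{Part (3).} This is the heart of the theorem, and I would argue by induction on the number of rows $n-1$, peeling off the bottom row. The base case $n=2$ is exactly \refT{T:FMringing}(4): a single row is a two-species exclusion process, and a ringing path at position $i$ performs $\vac\occ\to\occ\vac$ at sites $i-1,i$ precisely when $\con_{1,i-1}=\vac$ and $\con_{1,i}=\occ$, i.e.\ when $\bp(\con)_{i-1}=2>1=\bp(\con)_i$, matching Definition~\ref{D:trans}. For the inductive step the key structural observation is that the ringing path of Definition~\ref{D:ringingpath} is generated from the bottom upward by a recursion that on rows $1,\dots,n-2$ refers only to those rows; hence its truncation $(P^{(i)}_1,\dots,P^{(i)}_{n-2})$ is itself the ringing path at position $j:=P^{(i)}_{n-2}$ of the multiline queue obtained by deleting row $n-1$. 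Applying the inductive hypothesis to this smaller queue, the swap induced on the projection read off at row $n-2$ lands at sites $j-1,j$. It then remains to prove a \emph{two-row commutation lemma}: given the coloring already assigned to row $n-2$, the bully-path queueing down to row $n-1$, together with the bottom-row action of the ringing path (which moves one $\occ$ from $i$ to $i-1$ exactly when $\con_{n-1,i-1}=\vac,\ \con_{n-1,i}=\occ$, and otherwise fixes the bottom pattern), carries the single adjacent swap at $(j-1,j)$ on row $n-2$ to a single adjacent swap at $(i-1,i)$ on row $n-1$, with $j=i$ if $\con_{n-1,i}=\occ$ and $j=i+1$ if $\con_{n-1,i}=\vac$.

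I expect this two-row lemma to be the main obstacle. The difficulty is to show that the lone disturbance introduced by the ringing path shifts the set of claimed service opportunities between the two rows by exactly one position along a single queueing arc, with no secondary rearrangement of the matching elsewhere. I would prove it by isolating the unique queueing path on which the moved $\occ$ sits and tracking the reassignment as a ``shift by one''; crucially, I would strengthen the inductive statement to the biconditional form that the ringing path at $i$ changes $\con$ if and only if $\bp(\con)_{i-1}>\bp(\con)_i$, and in that case $\bp(\con')$ is $\bp(\con)$ with sites $i-1,i$ interchanged. This stronger statement, which the same case analysis delivers, is what part (2) needs.

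\textbf{Part (2).} With the strengthened part (3) in hand, lumpability follows from \cite[Lemma 2.5]{levperwil} by a rate count. The outgoing transitions from $\con\in\ofm_m$ are the $N$ ringing path transitions, each of rate $1$; by part (3) the one at position $i$, when it fires, pushes forward to the exclusion swap at sites $i-1,i$, and it fires if and only if $\bp(\con)_{i-1}>\bp(\con)_i$. Consequently, for a fixed legal move $\pi\to\pi'$ of $\om_m$ interchanging the descent at $(j-1,j)$, exactly one ringing path (namely $i=j$) realizes it and it does fire, so $M(\con,\bp^{-1}(\pi'))=1$ for every $\con\in\bp^{-1}(\pi)$; this depends only on $\pi$ and equals the rate $1$ of Definition~\ref{D:trans} in the homogeneous model, giving the lumping onto $\om_m$. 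The ``moreover'' clause is then immediate: deleting all rows below row $r$ turns $\ofm_m$ into $\ofm_{m'}$ with $m'=(m_1,\dots,m_r,N-M_r)$, the induced dynamics on the top $r$ rows is, by the locality of the ringing-path recursion, again the Ferrari--Martin process on $\ofm_{m'}$, and the bully-path projection through row $r$ is by construction the projection $\bp$ for $\ofm_{m'}$; hence the same rate count yields the lumping onto $\om_{m'}$.
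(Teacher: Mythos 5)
The paper offers no proof of this theorem to compare against --- it is quoted from \cite{FM2} --- so your attempt must stand on its own, and it does not: the step you yourself flag as ``the main obstacle,'' the two-row commutation lemma in part (3), is described but never proved, and that lemma is essentially the entire content of the theorem. Worse, the strengthened statement you propose to carry through the induction --- that the ringing path transition at $i$ changes $\con$ if and only if $\bp(\con)_{i-1}>\bp(\con)_i$ --- is false, and the paper's own Figure~\ref{F:clockseq} is a counterexample: the ringing path $P^{(5)}=(7,6,6,5)$ produces a genuine transition of the multiline queue (two $\occ$'s move left, on rows 1 and 3), yet $\bp(\con)_4=3<5=\bp(\con)_5$ and the projected word $4\,5\,2\,3\,5\,3\,4\,1$ is unchanged. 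Ringing path transitions routinely shuffle $\occ$'s \emph{inside} a fiber of $\bp$; the correct statement is only that the \emph{projection} changes exactly when $\bp(\con)_{i-1}>\bp(\con)_i$, and then by the swap at $(i-1,i)$. Since your induction is explicitly designed so that ``the same case analysis delivers'' the false biconditional, the inductive step cannot close as designed. Your part (2) is repairable, because it never needed the false direction: part (3) as stated already implies that the transition at $i$ leaves the fiber $\bp^{-1}(\pi)$ precisely when $\pi_{i-1}>\pi_i$ (otherwise the projection is unchanged, whether or not $\con$ moves), and distinct firing positions land in distinct fibers, so $M(\con,\bp^{-1}(\pi'))=1$ independently of $\con$, which is what \cite[Lemma 2.5]{levperwil} requires.

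Your part (1) sketch is sound (the order-independence of greedy rightward claiming under adjacent transpositions of same-class servers is correct, and the slack $m_{r+1}\ge 1$ does handle the wrap-around). But the ``moreover'' argument in part (2) contains a second false claim: the dynamics induced by the full chain on the top $r$ rows is \emph{not} the Ferrari--Martin process on $\ofm_{m'}$. The ringing-path recursion runs bottom-up, so the position at which a path enters row $r$ depends on the deleted rows, and paths from different bottom sites merge: if $\con_{s,j}=\vac$ and $\con_{s,j+1}=\occ$, then both $j$ and $j+1$ continue to $j+1$ at the next row up. Hence a given truncated transition is triggered at rate $0$, $1$ or $2$, depending on the lower rows. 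Concretely, for $m=(1,1,1)$ with row 1 equal to $\occ\vac\vac$: if row 2 is $\occ\occ\vac$, the bottom clocks at sites $1$ and $3$ both push the top particle left (rate $2$), while if row 2 is $\occ\vac\occ$ only the clock at site $1$ does (rate $1$); so the top-row marginal of the full process is not even lumpable. The ``moreover'' clause should instead be read as the first sentence of part (2) applied to the type-$m'$ system (the Ferrari--Martin process on $\ofm_{m'}$, with clocks on \emph{its} bottom row $r$), which requires no new argument beyond the main statement --- but it cannot be derived the way you propose.
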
 

For example, one can check that both the multiline queues in Figure~\ref{F:clockseq} project to numbers shown, irrespective of the order of choosing the $\occ$'s. Moreover, the order will not affect the class of any $\occ$ at any row.

For each particle we can join the paths in the recursive definition to a trace which we call its {\bf bully path}. 
Hence, a bully path for a class $j$ particle starts where the particle first is classified on row $j$ and moves down one row and then 
to the right (circularly) on a row while the particle is queueing on that row. Then the path continues 
downwards from the new classified $\occ$ and continues recursively. Note that the bully paths are not uniquely defined, but the classes of the $\occ$'s are. 
An example of the bully-path projection is given in Figure~\ref{F:bullypatheg}.

\begin{figure}[h!]
\begin{center}
\begin{tikzpicture}
\matrix [column sep=1cm, row sep = 0.5cm] 
{
\node {$\vac$}; & \node{$\vac$}; & \node(a1) {$\occ_1$}; & \node {$\vac$}; & \node {$\vac$}; & \node {$\vac$}; \\ 
\node {$\vac$}; & \node(b1){$\occ_2$}; & \node(a2) {$\occ_1$}; & \node {$\vac$}; & \node {$\vac$}; & \node {$\vac$}; \\ 
\node(c1) {$\occ_3$}; & \node(b2){$\vac$}; & \node(a3) {$\vac$}; & \node(a4) {$\vac$}; & \node(a5) {$\occ_1$}; & \node(b3) {$\occ_2$}; \\ 
\node(b5) {$\occ_2$}; & \node(c3){$\occ_3$}; & \node {$\vac$}; & \node(d1) {$\occ_4$}; & \node(a6) {$\vac$}; & \node(a7) {$\occ_1$}; \\ 
\node(a9) {$\occ_1$}; & \node(b7){$\occ_2$}; & \node(c5) {$\occ_3$}; & \node(d3) {$\occ_4$}; & \node(e1) {$\occ_5$}; & \node(a8) {$\vac$}; \\ 
\node(a10) {1}; & \node(b8){2}; & \node(c6) {3}; & \node(d5) {4}; & \node(e3) {5}; & \node {6}; \\ 
}; 
\draw [-,red] (a1.center) -- (a2.center) -- (a3.center) -- (a4.center) -- (a5.center) -- (a6.center) -- (a7.center) -- (a8.center) -- (4.3,-1.45);
\draw [-,red] (-4.5,-1.45) -- (a9.center) -- (a10.center);
\draw [-,blue] (b1.center) -- (-2.4,0.65) -- (4,0.65) -- (4,-0.3) -- (4.3,-0.3);
\draw [-,blue] (-4.5,-0.5) -- (b5.center)  -- (-4,-1.25) -- (-2.45,-1.25) -- (b8.center);
\draw [-,green] (c1.center) -- (-4,-0.3) -- (-2.4,-0.3) -- (-2.4,-1.2) -- (-0.8,-1.2) -- (c6.center);
\draw [-,yellow] (d1.center) -- (d3.center) -- (d5.center);
\draw [-,orange] (e1.center) -- (e3.center);
\end{tikzpicture}
\caption{An example of a multiline queue with its set of bully paths for $N=n=6$.
The bully-path projection defined in Algorithm~\ref{A:bullypath} is written below.}
\label{F:bullypatheg}
\end{center}
\end{figure}
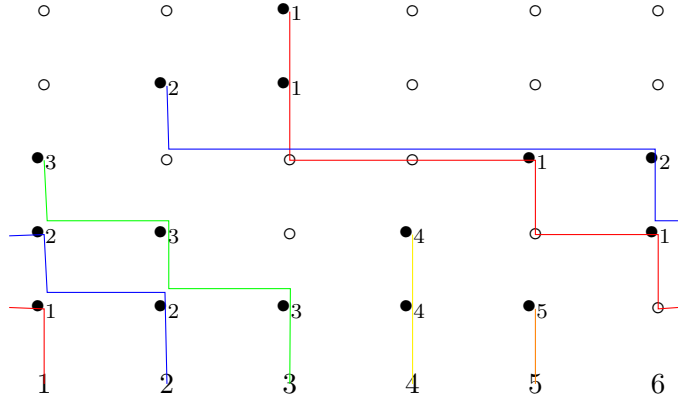

\smallskip
We concentrate on following conjecture of Lam and Williams \cite{lamwill}, where $N=n$.
Let  $w_0$ denote the reverse permutation $n,n-1,\dots,2,1$. 

\begin{conjecture} (Lam and Williams \cite{lamwill})\label{C:LW}
With the normalization $w(w_0)=x_1^{\binom{n-1}{2}}x_2^{\binom{n-2}{2}}\dots x_{n-2}^{\binom{2}{2}}$ every stationary weight
$w(\pi)$ is a polynomial with positive integer coefficients.
\end{conjecture}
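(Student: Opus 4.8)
The plan is to lift the computation to the multiline queues of Ferrari and Martin and to upgrade their process from the homogeneous to the inhomogeneous regime, so that positivity and integrality fall out of a combinatorial fiber sum. In the homogeneous case \refT{T:FMringing}(3) gives that the Ferrari--Martin process on $\ofm_m$ has uniform stationary measure, while \refT{T:FMbully}(2) says the bully-path projection $\bp$ lumps it to the exclusion process on $\om_m$; by lumpability the homogeneous weight of a state $\pi$ is therefore proportional to the fiber size $|\bp^{-1}(\pi)|$. I would seek the weighted refinement in which each multiline queue $\con\in\ofm_m$ carries a monomial $\mathrm{wt}(\con)\in\mathbb Z[x_1,\dots,x_{n-1}]$ and
\bew
w(\pi)\;=\;\sum_{\con\in\bp^{-1}(\pi)}\mathrm{wt}(\con).
\eew
Being a finite sum of monomials, each with coefficient $+1$, the right-hand side is automatically a polynomial with positive integer coefficients, so the whole of Conjecture~\ref{C:LW} reduces to producing such a weight and proving this identity. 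As a guide, scaling $x_j\mapsto tx_j$ multiplies the generator by $t$ and fixes its one-dimensional kernel, which together with the stated normalization forces every $w(\pi)$ to be homogeneous of degree $\binom{n-1}{2}+\cdots+\binom{2}{2}=\binom{n}{3}$; hence each $\mathrm{wt}(\con)$ should be a monomial of that common degree, and indeed $x_n$ cannot occur since class $n$ never initiates a swap in Definition~\ref{D:trans}.

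The mechanism is to replace the rate-$1$ clocks of the ringing-path transitions (Definition~\ref{D:ringingpath}) by inhomogeneous rates, monomials in the $x_j$ read off from the classes served along the path. Two properties must be arranged at once: (i) the generalized multiline process has stationary distribution $\mathrm{wt}$, i.e.\ $\mathrm{wt}$ solves the master equation \eqref{mastereq} for the new rates and is a monomial on each queue; and (ii) the new rates still make $\bp$ a lumping whose quotient is exactly the \emph{inhomogeneous} exclusion process of Definition~\ref{D:trans}. Granting both, \cite[Lemma 2.5]{levperwil} transports $\mathrm{wt}$ to the quotient, and its pushforward $\pi\mapsto\sum_{\con\in\bp^{-1}(\pi)}\mathrm{wt}(\con)$ is stationary for the exclusion process, giving the displayed identity. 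Equivalently one verifies it by hand, matching effective rates \eqref{effrate} transition by transition: \refT{T:FMbully}(3) identifies each ringing-path transition at position $i$ with the exclusion swap at sites $i-1,i$, and one checks that after summing the new rates over the fiber all dependence on the internal queue structure collapses to the exclusion rate $x_b$.

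It remains to fix the overall scale. Here one shows that the fiber over the reverse permutation is a singleton, $|\bp^{-1}(w_0)|=1$, by tracing Algorithm~\ref{A:bullypath} on the unique queue that can project to $w_0$, and then computes directly that its monomial weight is $x_1^{\binom{n-1}{2}}x_2^{\binom{n-2}{2}}\cdots x_{n-2}^{\binom{2}{2}}$. This matches the normalization of Conjecture~\ref{C:LW}, so no further rescaling is needed.

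The hard part is the simultaneous validity of (i) and (ii): a single rate assignment must make $\mathrm{wt}$ stationary on $\ofm_m$ \emph{and} preserve lumpability with quotient rates exactly $x_b$. The two demands pull against each other, since enriching the clocks to record the $x_j$ tends to spoil the cancellations that solve the ringing-path master equation, while the constraints needed to keep that equation solvable may prevent the projected rates from collapsing to $x_b$. Making these compatible requires a precise understanding of how a ringing-path transition alters each monomial, and I expect it to be achievable for small systems --- in particular for three species, yielding \refT{T:FM3}, and for the weight that tracks only the power of $x_1$ --- but to be the genuine obstruction to the conjecture for general $n$.
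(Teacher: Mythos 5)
Your proposal does not prove Conjecture~\ref{C:LW}, and in fairness it could not be compared against ``the paper's proof,'' because the paper does not prove it either: this statement is left as a conjecture, and what you have written is essentially a rediscovery of the paper's own program. The fiber-sum reduction $w(\pi)=\sum_{\con\in\bp^{-1}(\pi)}\mathrm{wt}(\con)$ with monomial weights $\mathrm{wt}(\con)$ is exactly the aim the authors state just before Conjecture~\ref{C:main}, and Conjecture~\ref{C:main} is their explicit candidate for the monomial $\mathrm{wt}(\con)$. Your peripheral observations are sound (the common degree $\sum_{k=2}^{n-1}\binom{k}{2}=\binom{n}{3}$, the absence of $x_n$, the singleton fiber over $w_0$), and your expectation of where the mechanism succeeds matches the paper's actual partial results: Theorem~\ref{T:FM3} ($n=3$, by weighting the ringing-path transitions) and Theorem~\ref{T:FM1} (one first-class particle, tracking only $x_1$).

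The genuine gap is your step ``(i) and (ii) hold simultaneously,'' which is not a technical verification left to the reader but the entire open content of the conjecture --- and, worse, the specific mechanism you propose is asserted by the paper to \emph{fail} beyond $n=3$. In Section~\ref{S:new} the authors state that for $N=n=4$, if all transition rates are of the form $x_i$ and all stationary weights are monomials, then the master equations \eqref{mastereq} cannot all be satisfied simultaneously; they ``believe that it is not possible'' to weight the ringing-path transitions in this way for $n\ge 4$. So decorating the Ferrari--Martin clocks with monomial rates, which you present as the route to the general case, is precisely the dead end that forced the authors to invent a different chain (the multiline coupe process) for $n=3$ in the hope that \emph{it}, rather than weighted ringing paths, generalizes. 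A proof of Conjecture~\ref{C:LW} along these lines would therefore need a new family of transitions on $\ofm_m$ (or a weight that is not a single monomial per queue, or rates not of the form $x_i$), none of which your proposal supplies; as written, it reduces the conjecture to a statement that is at best open and, in the form you specify, contradicted by the paper's $N=n=4$ computation.
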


From \refT{T:FMringing}, we know that the stationary distribution on multiline queues is uniform when all the $x_{i}$'s are equal to 1. This means that to find the stationary weight of a given
permutation it suffices to count the number of multiline queues that project to that permutation. Let us point out that there is no known easy way 
of doing this. In particular, the conjecture by Lam \cite{lam} about the stationary weight of the identity is not proved. It would amount to proving that
there are $\prod_{i=1}^{n-1}\binom{n-1}{i}$ \mlqs{} that project to the identity.

Our aim is to define a Markov chain on the multiline queues such that the stationary weight in the general setting is a monomial, and the sum of 
the monomials corresponding to a certain multipermutation under the projection $\bp$ leads to the stationary distribution of the 
multispecies exclusion process
with the transition rules given by \eqref{rates}. This would give a combinatorial proof of Conjecture \ref{C:LW}, and moreover a generalization of that conjecture to multipermutations. 

We have not settled their conjecture in full generality, but we do have partial results which will be presented in the remaining three sections. 
We now present a conjecture describing the stationary weight of each \mlq.  
Let $\con$ be a configuration of length $N$ with $n$ different classes of particles. A vacancy  $\vac$ in $\con$ is called 
$i${\bf -covered} if it is  traversed by a bully path from an $i$-class particle (i.e. starting on row $i$) , but not traversed by any path from an
 $i'$-class particle, with $i'<i$. 
 For $1\le i<r\le n$, let $z_{r,i}(\con)$ be the number of $\vac$'s on row $r$ of $\con$  that are $i$-covered. 
 
\begin{remark}
 We note that $z_{r,i}$ is well-defined in the sense that it does not depend on the order of bully paths, see discussion before 
 Conjecture \ref{C:LW}.
\end{remark}
  
Let $v_r:=N-M_r$ be the number of vacancies $\vac$ on row $r$, and let $V_r:=\sum_{i=r+1}^{n-1} v_i$ be the
number of vacancies below row $r$. Our main conjecture is as follows.

\begin{conjecture}\label{C:main}
There is a Markov chain on the state space of $\ofm_m$ which lumps via the bully-path projection in Algorithm~\ref{A:bullypath} to the inhomogeneous multispecies exclusion process on $\om_{m}$ such that the stationary weight of any configuration $\con$ is given by
\[
w(C) = x_1^{V_1}x_2^{V_2}\dots x_{n-2}^{V_{n-2}}
\prod_{1\le i<r\le n} \left( \frac{x_r}{x_i} \right)^{z_{r,i}}.
\]
\end{conjecture}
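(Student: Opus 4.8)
The plan is to construct the desired Markov chain on $\ofm_m$ explicitly by attaching rates to the ringing path transitions of Definition~\ref{D:ringingpath}, and then verify the two requirements of Conjecture~\ref{C:main} separately: first that the proposed monomial $w(\con)$ is stationary for this chain, and second that the chain lumps via $\bp$ to the inhomogeneous process. The natural starting point is \refT{T:FMringing}, where the homogeneous ($x_i \equiv 1$) chain has the uniform stationary distribution. I would look for a rate $r_i(\con)$ to assign to the ringing path transition at position $i$ so that, upon specializing all $x_j=1$, every rate becomes $1$ and the uniform distribution is recovered. The guess for these rates should be read off from the exponents appearing in $w(\con)$: since a ringing path transition at position $i$ moves each $\occ$ that it services one step and alters the covering statistics $z_{r,i}$ and the vacancy-position data only along the affected columns, the ratio $w(\con')/w(\con)$ across a single transition is a monomial in the $x_j/x_i$, and I would define the rate so that this ratio combines with the rate into the clean effective rate demanded by the master equation \eqref{mastereq}.

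The first key step is therefore a careful bookkeeping lemma: fix a ringing path transition $\con \to \con'$ at position $i$ and compute exactly how $V_1,\dots,V_{n-2}$ and each $z_{r,i}$ change. Because the ringing path proceeds row by row (Definition~\ref{D:ringingpath}) and the bully-path classes are order-independent (\refT{T:FMbully}(1)), each affected $\occ$ either stays in place or hops across one vacancy, and one can track the induced change in coverings row by row. I expect the net effect to telescope, so that $w(\con')/w(\con)$ depends only on the species of the particle being moved at the bottom row, matching the rate $x_b$ in \eqref{rates}. The second step is to push the rates through the inverse transitions guaranteed by \refT{T:FMringing}(1): since incoming and outgoing transition counts agree at every state, I would pair each outgoing transition from $\con$ with its inverse and check that the master equation balances term by term, i.e.\ that the effective rate \eqref{effrate} of each incoming transition equals the rate of the matching outgoing one. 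This local-balance–style pairing is what makes the uniqueness argument (the chain is irreducible by \refT{T:FMringing}(2)) conclude that $w(\con)$ is indeed stationary.

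The third step is lumpability. Here I would invoke \refT{T:FMbully}(3): a ringing path transition at position $i$ already projects under $\bp$ to the $\om_m$-transition at sites $i-1,i$. What remains is to show the lumpability condition $M(\con,[\pi]) = M(\con',[\pi])$ whenever $\bp(\con)=\bp(\con')$, i.e.\ that the \emph{sum} of rates of all ringing path transitions carrying one fibre of $\bp$ into another fibre is constant on the fibre and equals $x_b$. Combined with the summation over the fibre, the assigned rates must aggregate to exactly the inhomogeneous rate \eqref{rates}; this is where the precise form of the $z_{r,i}$ exponents is forced, and I would verify it against the worked $m=(1,1,1)$ example in Figure~\ref{F:permeg} as a consistency check.

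The hard part will be the bookkeeping in the first step, namely pinning down the change in the covering statistics $z_{r,i}$ under a single ringing path transition. The bully paths are not uniquely defined (only the classes of the $\occ$'s are), so I must argue that the \emph{change} in each $z_{r,i}$ is independent of the chosen paths, and that the telescoping across rows produces exactly the claimed exponents and no spurious factors; controlling the interaction between the ringing path (which moves upward from row $n-1$) and the bully paths (which cascade downward from the top) is the genuine obstacle. If the net ratio $w(\con')/w(\con)$ fails to depend only on the bottom-row species, the proposed monomial would need correction, so this step is both the crux and the place where the conjecture could conceivably break.
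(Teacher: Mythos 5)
First, be aware that the statement you set out to prove is Conjecture~\ref{C:main}: the paper itself does \emph{not} prove it, but only two special cases ($n=3$ in \refT{T:FM3} and \refS{S:new}, and $m_1=1$ with $x_2=\cdots=x_{n-1}=1$ in \refT{T:FM1}); the general statement was settled only later, by different methods (a matrix product ansatz by Arita--Mallick, and the Linusson--Martin construction), as recorded in the paper's note added in proof. So there is no paper proof to match, and the real question is whether your plan could work. It cannot, for a reason the paper states explicitly. Your construction assigns a rate to each Ferrari--Martin ringing path transition and asks the conjectured monomials to satisfy the master equation \eqref{mastereq}. But lumpability pins the rates down almost completely: by \refT{T:FMbully}(3), the ringing path transition at position $i$ is the \emph{only} transition out of $\con$ whose projection is the swap at sites $i-1,i$, so its rate is forced to be exactly $x_b$ with $b=\bp\con_i$; your only freedom is in the rates of projection-preserving transitions. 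The authors report (in \refS{S:Multi} just after Conjecture~\ref{C:main}, and again in \refS{S:new}) that already for $N=n=4$, with all rates of the form $x_i$ and monomial stationary weights --- which is precisely the setting your plan produces --- the master equations cannot be satisfied simultaneously. This is exactly why they abandon the ringing-path route for $n\ge 4$ and instead introduce the coupe process of \refS{S:new}.

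Second, the two local claims at the heart of your first and second steps are already refuted by the paper's proved case $n=3$. The ratio $w(\con')/w(\con)$ across a single transition does \emph{not} depend only on the bottom-row species: in the proof of \refT{T:FM3}, a 1 jumping left over a non-covered 3 changes the number of covered 3's, giving weight ratio $x_2/x_1$ (hence effective rate $x_2$ for a rate-$x_1$ transition), whereas a 1 jumping over a covered 3 gives ratio $1$ --- same bottom-row species, different ratios. The correction depends on covering data, which is why the rates in Definition~\ref{D:FMq3} must distinguish covered from non-covered 3's; there is no telescoping that makes this disappear. Likewise, the master equation in that proof is not balanced by pairing each incoming transition with the inverse of an outgoing one (detailed balance fails for this chain); it is balanced blockwise --- exactly one outgoing $x_2$-transition and one incoming effective-$x_2$ transition per block of 3's whose leftmost 3 is non-covered, plus a counting argument via \refT{T:FMringing}(1) for the remaining $x_1$'s --- and the transitions so matched are not inverses of one another. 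So the ``bookkeeping lemma'' you correctly flag as the crux is not merely hard: it is false as stated, and the overall strategy is one the authors checked cannot succeed beyond $n=3$.
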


Our results in \refS{S:FM3} and \refS{S:FM1} support this conjecture. In these sections, 
we use  the ringing path  transitions of Ferrari-Martin. 
It seems difficult to extend this to approach by giving weights to the ringing path transitions when $n\ge 4$. We believe that it is not possible to do so if we restrict to a monomial as weight for each \mlq{} and a single $x_i$ for each transition.

\noindent
{\bf Example} Consider the \mlq{} $\con$ in \refF{F:bullypatheg}. Since it is a permutation we have $V_r=\binom{n-r}{2}$, for all $r$.
The only non-zero $z_{i,j}$'s are $z_{1,3}=2, z_{1,4}=z_{1,5}=z_{2,3}=1$. Conjecture \ref{C:main} states that
\[w(\con)=x_1^{10}x_2^6x_3^3x_4\left(\frac{x_3}{x_1}\right)^{2}\left(\frac{x_4}{x_1}\right)\left(\frac{x_5}{x_1}\right)\left(\frac{x_3}{x_2}\right)=
x_1^{6}x_2^5x_3^6x_4^2x_5.
\]

We have checked these conjectures for all possible configurations up to size $N=6$ on Maple$^{TM}$.
We did these by comparing two quantities. First, we calculated the stationary  weight for each multipermutation by looking at the inhomogeneous multispecies exclusion process. Secondly, we calculated the weight for each configuration by adding the stationary weight as given by Conjecture \ref{C:main}  of each \mlq{} which projected to it.

\section{Three species Ferrari-Martin process} \label{S:FM3}
Our first result is a proof of the natural generalization of  Conjecture \ref{C:LW} for multipermutations with three classes, i.e. $n=3$. To state the result, we will simplify notation in the following way. 
Let $\con$ be a configuration in $\ofm_{(m_{1},m_{2},m_{3})}$ whose bully-path projection 
$\bp\con= \pi$ with $\pi_{i}=3$. 
We say that the 3 is {\bf covered} 
if the $\vac$ on row 2 and column $i$ is 1-covered, i.e. if there is a bully path passing through that $\vac$.
If no such bully path exists, we say that the 3 is {\bf non-covered}.
We will write $\bp\con_i$ for $\bp(\con)_i$ from now on.

\begin{definition} \label{D:FMq3}
The {\bf inhomogeneous Ferrari-Martin multiline process on three species} 
is the Markov chain on $\ofm_{(m_{1},m_{2},m_{3})}$ where the dynamics occurs is governed 
by ringing path transitions as follows. 
If the transition is from $\con$ to $\con'$  and the ringing path starts at $i$, then 
\[
\text{rate}(\con \to \con') = \begin{cases}
x_{1} & \bp\con_{i} = 1 \text{ or } \bp\con_{i} = \text{ covered } 3, \\
x_{2} & \bp\con_{i} = 2 \text{ or } \bp\con_{i} = \text{ non-covered } 3.
\end{cases}
\]
\end{definition}

The graph of the Markov chain for the multiline queue $m=(1,1,1)$ is given in \refF{F:mlq1eg}.
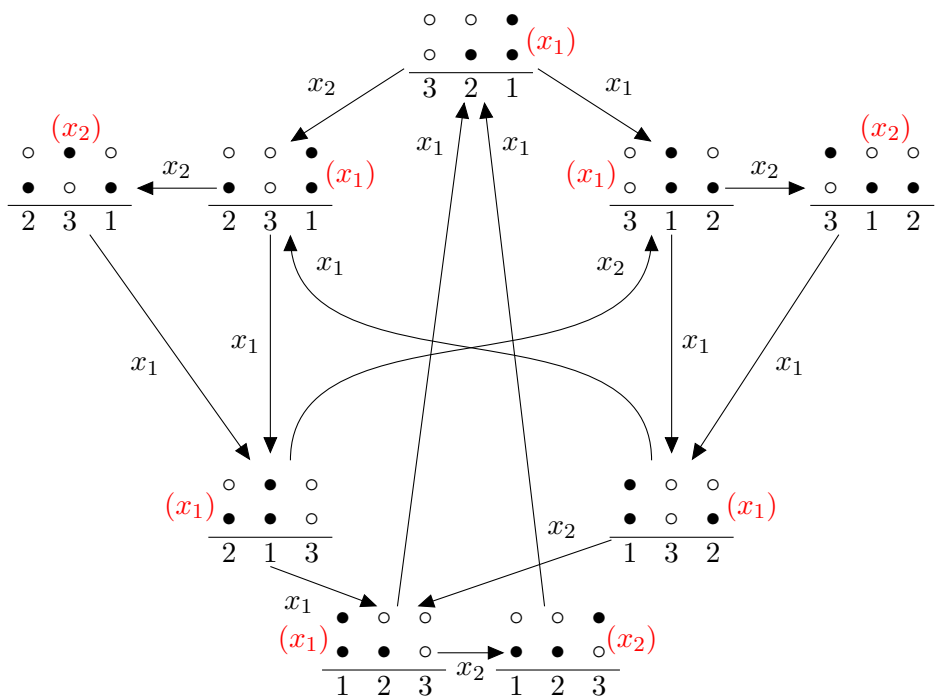
\begin{figure}[h]
\begin{center}
\begin{tikzpicture} [scale=0.88, >=triangle 45]
\draw (3,9) node 
{$
\begin{array}{c c c} \vac & \vac & \occ \\ \vac & \occ & \occ\\ \hline 3 & 2 & 1 \end{array}$
};
\draw (4.2,9.2) node [color=red] {$(x_{1})$};
\draw (6,7) node 
{
$\begin{array}{c c c} \vac & \occ & \vac  \\ \vac & \occ & \occ\\ \hline 3 & 1 & 2 \end{array}$
};
\draw (4.8,7.2) node [color=red] {$(x_{1})$};
\draw (9,7) node {
$\begin{array}{c c c} \occ & \vac & \vac \\ \vac & \occ & \occ\\ \hline 3 & 1 & 2 \end{array}$
};
\draw (9.2,7.9) node [color=red] {$(x_{2})$};
\draw (0,7) node 
{$
\begin{array}{c c c} \vac & \vac & \occ \\ \occ & \vac & \occ\\ \hline 2 & 3 & 1 \end{array}$
};
\draw (1.2,7.2) node [color=red] {$(x_{1})$};
\draw (-3,7) node 
{
$\begin{array}{c c c} \vac & \occ & \vac  \\ \occ & \vac & \occ\\ \hline 2 & 3 & 1 \end{array}$
};
\draw (-2.9,7.9) node [color=red] {$(x_{2})$};
\draw (6,2) node {
$\begin{array}{c c c} \occ & \vac & \vac \\ \occ & \vac & \occ\\ \hline 1 & 3 & 2 \end{array}$
};
\draw (7.2,2.2) node [color=red] {$(x_{1})$};
\draw (4.3,0) node 
{$
\begin{array}{c c c} \vac & \vac & \occ \\ \occ & \occ & \vac\\ \hline 1 & 2 & 3 \end{array}$
};
\draw (-1.2,2.2) node [color=red] {$(x_{1})$};
\draw (0,2) node 
{
$\begin{array}{c c c} \vac & \occ & \vac  \\ \occ & \occ & \vac\\ \hline 2 & 1 & 3 \end{array}$
};
\draw (0.5,0.2) node [color=red] {$(x_{1})$};
\draw (1.7,0) node {
$\begin{array}{c c c} \occ & \vac & \vac \\ \occ & \occ & \vac\\ \hline 1 & 2 & 3 \end{array}$
};
\draw (5.4,0.2) node [color=red] {$(x_{2})$};

\draw [->] (2,8.8) -- node [auto,swap] {$x_{2}$} (0.3,7.7);
\draw [->] (4,8.8) -- node [auto] {$x_{1}$} (5.7,7.7);
\draw [->] (0,6.3) -- node [auto,swap] {$x_{1}$} (0,3);
\draw [<-] (1.6,0.7) -- node [auto] {$x_{1}$} (0,1.3);
\draw [<-] (2.2,0.7) -- node [auto,very near end] {$x_{2}$} (5.1,1.7);
\draw [<-] (6,3) -- node [auto,swap] {$x_{1}$} (6,6.3);
\draw [->] (1.9,0.7) -- node [auto,very near end] {$x_{1}$} (2.9,8.3);
\draw [->] (5.7,2.9) to [out = 90, in = -90] node [auto,very near end,swap] {$x_{1}$}(0.3,6.3);
\draw [->] (0.3,2.9) to [out = 90, in = -90] node [auto,very near end] {$x_{2}$}(5.7,6.3);

\draw [->] (6.8,7) -- node [auto] {$x_{2}$} (8,7);
\draw [->] (-0.8,7) -- node [auto,swap] {$x_{2}$} (-2,7);
\draw [->] (2.5,0) -- node [auto,swap] {$x_{2}$} (3.5,0);

\draw [->] (8.5,6.3) -- node [auto] {$x_{1}$} (6.3,2.9);
\draw [->] (-2.7,6.3) -- node [auto,swap] {$x_{1}$} (-0.3,2.9);
\draw [->] (4.1,0.7) -- node [auto,very near end, swap] {$x_{1}$} (3.2,8.3);

\end{tikzpicture}
\end{center}
\caption{All transitions and the corresponding rates for the inhomogeneous Ferrari Martin multiline process with $m=(1,1,1)$. The stationary weights are given in parenthesis and in red. Compare this with Figure~\ref{F:permeg} to see the lumping procedure.}
\label{F:mlq1eg}
\end{figure}

We now  collect some observations about these transitions in the following lemma. Each of these can be verified without too much difficulty and we skip the proof.

\begin{lemma} \label{L:prop3species}\hspace{1cm}
\begin{enumerate}
\item If $\bp\con_{i}=2$, the multiline queue at site $i$ is forced to be of the form
\bew
\begin{array}{c}
\vac \\
\occ \\
\hline
2
\end{array}
\eew
irrespective of the label at sites before and after $i$.

\item If $\bp\con_{i-1}=1$ and $\bp\con_{i}=2$, the ringing path transition at site $i$ does nothing.
\bew
\begin{array}{cc}
\vac/\occ & \vac \\
\occ & \occ \\
\hline
1 & 2
\end{array}
\eew

\item Suppose $\bp\con_{i}, \bp\con_{i+1}, \dots, \bp\con_{j}=3$ and $\bp\con_{i-1},\bp\con_{j+1}\neq 3$. Then there is at most one non-covered 3 at position $l$ in this block such that the ringing path transition at $l$ changes the configuration.

\item The number of covered 3's can increase (resp. decrease) only with transitions starting at sites with non-covered 3's (resp. 1's).
\end{enumerate}
\end{lemma}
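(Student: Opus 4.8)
The plan is to reduce everything to two structural facts about the two‑row case $n=3$ and then read off each claim. For the first fact, note that since there is no row $3$ a class‑$2$ particle has a trivial bully path, so a $\vac$ on row $2$ can only be traversed by the bully path of a class‑$1$ particle; hence $\bp\con_i$ is determined by row $2$ alone ($\con_{2,i}=\vac\Rightarrow 3$, $\con_{2,i}=\occ\Rightarrow 1$ or $2$), and the class‑$1$ occupied sites of row $2$ are exactly those matched, under the greedy ``first unclassified $\occ$ to the right'' rule of Algorithm~\ref{A:bullypath}, to the class‑$1$ particles of row $1$. I would record this matching by the flow $\phi_k$, the number of class‑$1$ bully paths crossing the edge between columns $k$ and $k+1$ on row $2$; a covered $3$ at column $k$ is then exactly a $\vac$ that some path passes over, i.e. $\con_{1,k}=\occ$ or $\phi_{k-1}\ge 1$. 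For the second fact I compute the ringing path transition at site $i$ directly from Definition~\ref{D:ringingpath}: if $\con_{2,i}=\occ$ then $P^{(i)}_1=i$ and the only candidate swaps sit at the edge $(i-1,i)$ on each row, while if $\con_{2,i}=\vac$ then $P^{(i)}_1=i+1$ and the only candidate swap is on row $1$ at the edge $(i,i+1)$; in either case the swap fires exactly when the left site of the edge is $\vac$ and the right one is $\occ$.

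Parts (1) and (2) then follow at once. For (1), $\bp\con_i=2$ forces $\con_{2,i}=\occ$, and if $\con_{1,i}$ were $\occ$ the class‑$1$ particle there would descend onto the occupied $(2,i)$ and claim it as class $1$ (legitimate since, by Theorem~\ref{T:FMbully}(1), the classification is order‑independent, so we may process this particle first), contradicting class $2$; hence $\con_{1,i}=\vac$, which is the asserted picture. For (2), combine (1) with $\bp\con_{i-1}=1$ (so $\con_{2,i-1}=\occ$): the row‑$2$ candidate swap is blocked because its left site $\con_{2,i-1}=\occ$, and the row‑$1$ candidate swap is blocked because its right site $\con_{1,i}=\vac$, so the transition is idle.

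For (3), the second fact shows that a transition at a $3$ at column $l$ changes $\con$ iff $\con_{1,l}=\vac$ and $\con_{1,l+1}=\occ$. Inside a maximal block $\con_i,\dots,\con_j$ of $3$'s, row $2$ is entirely $\vac$, so no class‑$1$ path terminates in the block and $\phi_k=\phi_{k-1}+[\con_{1,k}=\occ]$ is nondecreasing across it; consequently the covered $3$'s form a suffix of the block and the non‑covered ones a prefix, every column of which has $\con_{1,\cdot}=\vac$. Among those non‑covered columns the change condition $\con_{1,l+1}=\occ$ can hold only at the rightmost one, where the prefix of $\vac$'s on row $1$ first meets an $\occ$; this yields at most one configuration‑changing non‑covered $3$.

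Part (4) is the substantial one, and I would prove it by classifying a transition at site $i$ according to $\bp\con_i$ and tracking the covered count $z_{2,1}$ through the flow $\phi$. A transition at a $2$ only slides a class‑$2$ $\occ$ one step left on row $2$ into a $\vac$ carrying no token, exchanging one non‑covered $\vac$ for another, so $z_{2,1}$ is unchanged; a transition at a covered $3$ slides a class‑$1$ source one step left over a column already under flow, which I check leaves $\phi$, and hence every covering, unchanged outside the two affected columns; a transition at a non‑covered $3$ moves a source onto the previously uncovered $\vac$, covering it while preserving all other coverings, so $z_{2,1}$ rises by exactly one (or does nothing). The main obstacle is the transition at a $1$: here $\con_{2,i}$ is an occupied class‑$1$ site, and both a row‑$1$ swap (moving a source) and a row‑$2$ swap (moving the absorbing $\occ$) may fire, giving four sub‑cases. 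In each I would verify that the flow out of the affected two‑column window is preserved — this is the step where one must invoke order‑independence (Theorem~\ref{T:FMbully}(1)) to guarantee the cyclic matching is untouched outside the window — and then count coverings on the two columns, finding that $z_{2,1}$ either stays put or drops by one, never rises. Together the four cases give exactly the monotonicity asserted in (4), and the bookkeeping in the ``$1$'' case, namely keeping the signs straight and confirming the window is self‑contained, is the only place that needs genuine care.
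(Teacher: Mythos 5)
The paper offers no proof of Lemma~\ref{L:prop3species} at all (it is dismissed as easily verifiable), so your argument stands on its own; and most of it stands well. Parts (1)--(3) are correct and complete: the two preliminary facts (for $n=3$ the projection is read off row 2 alone, and the explicit form of the ringing path with its at most two candidate swaps) are right, the order-independence trick in (1) is legitimate, and the observation that $\phi_k=\phi_{k-1}+[\con_{1,k}=\occ]$ is nondecreasing across a block of 3's, so that covered columns form a suffix and non-covered columns a row-1-vacant prefix, gives (3) cleanly. The flow bookkeeping is also the right tool for (4), and every case outcome you assert is in fact true: transitions at 2's and at covered 3's preserve the covered count, transitions at non-covered 3's raise it by at most one, and transitions at 1's lower it by at most one.

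The gap is at the crux of part (4). You claim the matching, hence the covering pattern, is untouched outside the affected two-column window, and you cite Theorem~\ref{T:FMbully}(1) for this; but order-independence compares different processing orders of \emph{one} configuration, and says nothing about how the matchings of two \emph{different} configurations (before and after a swap) relate -- on a cycle, a local change can in principle reroute paths globally. What is needed is a characterization of $\phi$ that can be compared across configurations: for instance the recursion $\phi_k=\max(\phi_{k-1}+s_k-t_k,\,0)$, where $s_k,t_k$ indicate a row-1 $\occ$ and a row-2 $\occ$ in column $k$, together with uniqueness of its cyclic solution (which holds because row 2 carries strictly more $\occ$'s than row 1, so the queue empties somewhere). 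With that, each transition alters $(s,t)$ in only two adjacent columns, and a two-line check per case shows the flow across the right boundary edge of the window is unchanged, whence $\phi$ and all coverings agree outside the window; only then is the inside count meaningful. Note that even your 2-transition case silently uses this invariance (to know the new $\vac$ at column $i$ is non-covered and that nothing changes elsewhere). Finally, the four sub-cases at a 1-transition are announced rather than executed; they do work out (for example, when only the row-2 swap fires and $\con_{1,i}=\vac$, the count drops by one exactly when $\phi_{i-2}+[\con_{1,i-1}=\occ]=1$ and is otherwise unchanged), but as written these, and the window-invariance step, are promissory notes rather than proof.
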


\begin{theorem} \label{T:FM3}
Let $m=(m_{1},m_{2},m_{3})$. With transition rates as above the stationary weight of a configuration
$\con\in\ofm_m$ is $x_{1}^{m_{3}-k} x_{2}^k$, where $k$ 
is the number of covered 3's in $\con$. 
Moreover, the Ferrari-Martin multiline process lumps to the inhomogeneous multispecies exclusion process.
\end{theorem}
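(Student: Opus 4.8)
The plan is to invoke uniqueness of the stationary distribution. Since the transition graph of the inhomogeneous process coincides with that of the homogeneous Ferrari--Martin process, it is irreducible by \refT{T:FMringing}(2), so it suffices to exhibit \emph{one} positive vector $w$ satisfying the master equation \eqref{mastereq} and to check the lumping condition separately. I would take $w(\con)=x_1^{m_3-k}x_2^k$ with $k=k(\con)$ the number of covered $3$'s, and verify \eqref{mastereq} at an arbitrary $\con$ in its effective-rate form: the sum of outgoing rates must equal the sum of incoming effective rates \eqref{effrate}.

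The observation that organizes everything is that both sides are sums of $x_1$'s and $x_2$'s. By Definition~\ref{D:FMq3} every \emph{outgoing} rate is $x_1$ (at a site projecting to a $1$ or a covered $3$) or $x_2$ (at a site projecting to a $2$ or a non-covered $3$). For the \emph{incoming} transitions, \refL{L:prop3species}(4) says $k$ changes only along a transition emanating from a $1$ (then $k$ drops) or from a non-covered $3$ (then $k$ rises), and in either case by exactly one, since the single rerouted bully path covers precisely one more (resp.\ one fewer) vacancy of the bottom row. Hence the weight ratio $w(\con')/w(\con)=(x_2/x_1)^{k(\con')-k(\con)}$ equals $x_2/x_1$ for an incoming $k$-decreasing move (nominal rate $x_1$) and $x_1/x_2$ for an incoming $k$-increasing move (nominal rate $x_2$); multiplying, every incoming effective rate also collapses to $x_1$ or $x_2$. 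Since $x_1,x_2$ are formal parameters, the master equation is thus equivalent to the combinatorial identity
\[
\#\{\text{outgoing transitions of rate }x_1\}=\#\{\text{incoming transitions of effective rate }x_1\},
\]
together with the analogue for $x_2$; by \refT{T:FMringing}(1) the total in- and out-degrees agree, so either identity implies the other.

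To prove this identity I would describe the incoming transitions explicitly through the inverse ringing paths of \refT{T:FMringing}(1), which traverse the queue upward and enumerate the predecessors $\con'$ of $\con$. Using \refL{L:prop3species}(1)--(2) (which force the local picture $\vac$ over $\occ$ at a $2$ and make the $1$-then-$2$ ringing path a mere loop) to discard the non-genuine cases, and \refL{L:prop3species}(3) to keep the block structure of the $3$'s under control near wrap-arounds, one checks that the outgoing rate-$x_1$ transitions (those at $1$'s and at covered $3$'s) biject with the incoming effective-rate-$x_1$ transitions (those arriving from $1$'s and covered $3$'s without changing $k$, together with the $k$-increasing ones arriving from non-covered $3$'s), and symmetrically for $x_2$. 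I expect this rate-labelled refinement of the Ferrari--Martin in-degree/out-degree equality to be the main obstacle: the homogeneous statement is blind to the labels, so one must track how each ringing path reshuffles the bully-path covering of the bottom row, confirm the exact $\pm1$ change of $k$, and rule out double counting inside a block of $3$'s.

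Finally, the lumping is comparatively direct. By \refT{T:FMbully}(3) a genuine ringing path at site $i$ projects to the $\om_m$-transition at sites $i-1,i$, which is non-trivial exactly when $\bp\con_{i-1}>\bp\con_i$; since $3$ is maximal this forces $\bp\con_i\in\{1,2\}$, so the projected move carries the rate $x_1$ (if $\bp\con_i=1$) or $x_2$ (if $\bp\con_i=2$), which is precisely the prescribed rate $x_{\bp\con_i}$ of \eqref{rates}. As only the ringing path at $i$ can realize the move at $i-1,i$, this common rate depends on $\con$ solely through $\bp\con$, so the lumpability criterion holds and the lumped chain is exactly the inhomogeneous multispecies exclusion process; the combinatorial skeleton (that this ringing path is genuine whenever $\bp\con_{i-1}>\bp\con_i$) is inherited from the homogeneous lumping \refT{T:FMbully}(2).
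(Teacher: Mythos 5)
Your reduction is exactly the one the paper uses: verify the master equation \eqref{mastereq} in effective-rate form, observe that both sides are formal linear combinations of $x_1$ and $x_2$, prove the count for one rate class, and deduce the other from the equality of in- and out-degrees in \refT{T:FMringing}(1); the lumping argument via \refT{T:FMbully}(3) is also the paper's and is essentially complete. The problem is that what you defer with ``one checks'' is not a routine verification but the entire content of the theorem, and the one quantitative claim you make in its support is unjustified. \refL{L:prop3species}(4) only says \emph{which} transitions can change $k$ (those at $1$'s and at non-covered $3$'s); it says nothing about the magnitude of the change, and your reason --- ``the single rerouted bully path covers precisely one more (resp.\ one fewer) vacancy'' --- ignores that class-1 bully paths interact through the queueing discipline: moving one $\occ$ can let it claim a service previously claimed by another particle, whose path is then rerouted in turn. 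Without a proof that $|\Delta k|\le 1$ with the correct correlation between the sign of $\Delta k$ and the nominal rate, the incoming effective rates need not lie in $\{x_1,x_2\}$ at all, and the formal-parameter argument that reduces \eqref{mastereq} to two counting identities collapses.

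The paper closes precisely this gap by a case analysis organized around maximal blocks of $3$'s, which your outline never reaches. Concretely it shows: (i) every outgoing $x_2$-transition is attached to a block of $3$'s whose \emph{leftmost} $3$ is non-covered, and there is exactly one such transition per such block --- either at the $2$ bordering the block (possible only when the whole first row of the block is vacant) or at a unique non-covered $3$ inside it, the two cases being mutually exclusive and exhaustive by \refL{L:prop3species}(1) and (3); (ii) each such block also receives exactly one incoming transition of effective rate $x_2$, coming either from a $2$ with rate $x_2$ and $\Delta k=0$, or from a $1$ with rate $x_1$ out of a state with $k+1$ covered $3$'s; (iii) every other incoming transition has effective rate $x_1$, the delicate case being a $1$ that has jumped over a $3$ which is covered after the jump, where one must argue that the $3$ was covered before the jump as well (this is decided by the row-1 content above the jumping $1$, and it is exactly what separates this case from the $k$-decreasing case (ii)). Steps (i)--(iii) are where all the combinatorics of covered $3$'s lives; your proposal correctly identifies that they are needed (``the main obstacle'') but does not supply them, so as it stands it is a sound plan with the central lemma missing, not a proof.
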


\begin{proof}
By \refT{T:FMbully} (3) , the only possible effect on the bully-path projection of the ringing path transition on $\con$ is that neighboring particles exchange according  multispecies exchange rules. By Definition~\ref{D:FMq3}, these happen with the same rates as the multispecies
exclusion process, namely $x_{1}$ and $x_{2}$. When a nontrivial ringing path transition 
is initiated at a position with a 3 (covered or not), the bully-path projection does not change.
This will guarantee that the lumpability criterion is satisfied. 

We will prove the result by showing that the formula  for the stationary weight satisfies the master equation \eqref{mastereq} at each state. Equivalently, we want to show that the total probability leaving a given configuration $\con$ is equal to that entering it. This is easiest if we represent the corresponding multipermutation in block form,
\bew
\bp\con = 1^{a_{1}} 2^{b_{1}}3^{c_{1}} 1^{a_{2}} 2^{b_{2}}3^{c_{2}} \cdots 
1^{a_{j}} 2^{b_{j}}3^{c_{j}},
\eew
where we want this representation to be maximal and unique, so that one of $a_{i}, b_{i}$ or $c_{i}$ can be zero but not two consecutive one of these can be zero. Further, suppose that the number of non-covered 3's in $\con$ is $k$, so that the stationary weight of $\con$ is 
$x_{1}^{m_{3}-k} x_{2}^k$.

Recall that by Theorem \ref{T:FMringing}(1) we have as many incoming as outgoing transitions for each state. Below we will ignore the loops and count only transitions where the state changes.
Let us first analyze the $x_{2}$ transitions. These occur if the ringing path transition starts at a site $i$ such that $\bp\con_{i}=2$ or a non-covered 3. By \refL{L:prop3species}(1), if $\bp\con_{i}=2$, the transition can only occur if $\bp\con_{i-1}=3$. Therefore, all such transitions occur either inside a block of 3's or at its boundary. Let us first look at outgoing transitions from 
$\con$ focussing our attention on a block of 3's. Notice that if the leftmost 3 in the block is covered, then so are all the other 3's in that block. This also forces the site to the right of that block to be a 1. Therefore, no $x_{2}$ transition can take place. The only $x_{2}$ transitions that occur happen when the leftmost 3 is not covered. We now have two possibilities.
\begin{enumerate}
\item[(a)] There is a transition at a site containing a 2, which happens only when all the sites in the first row of the block are $\vac$'s.
\bew
\begin{array}{ccccc}
\vac & \vac & \dots & |\vac & \vac| \\
\vac & \vac & \dots & |\vac & \occ| \\
\hline
3 & 3 & \dots & 3 & 2 
\end{array}
\longrightarrow
\begin{array}{ccccc}
\vac & \vac & \dots & |\vac & \vac| \\
\vac & \vac & \dots & |\occ & \vac| \\
\hline
3 & 3 & \dots & 2 & 3 
\end{array}
\eew

\item[(b)] There is a transition at a site containing a 3 within the block (the underlined 3 below). By \refL{L:prop3species}(3), only one such transition can occur. 
\bew
\begin{array}{ccccccc}
\!\dots\!&\vac & |\vac\; & \;\occ|&\!\dots & \vac/\occ & \vac/\occ\\
\!\dots\!&|\vac\; & \;\vac| & \vac & \!\dots & \vac & \occ\\
\hline
\!\dots\!&3 & \underline{3} & 3 &\!\dots & 3 & 1
\end{array}
\longrightarrow
\begin{array}{cccccccc}
\!\dots\!&\vac & |\occ\; & \;\vac|&\!\dots & \vac/\occ & \vac/\occ\\
\!\dots\!&|\vac\; & \;\vac| & \vac &\!\dots & \vac & \occ\\
\hline
\!\dots\!&3 & 3 & 3 &\!\dots & 3 & 1
\end{array}
\eew
\end{enumerate}

The important fact is that the (a) and (b) transitions are mutually exclusive as well as exhaustive. Therefore, there is one $x_{2}$ 
transition for every block of 3's, the leftmost of which is not covered and no other $x_2$ transitions.

\smallskip
Now, let us look at incoming transitions in the master equation \eqref{mastereq}. It will be convenient
to use the notion of effective rate defined in equation \eqref{effrate}.
We want to show that the only transitions with an effective rate of $x_{2}$ are exactly one per block of 3's with a non-covered leftmost 3. By effective rate $x_2$, we mean contributing $x_2w(\con)$ to the incoming side of the master equation, which happens in two different ways. Either the configuration leading to $\con$ has $k$ covered 3's and the transition has rate $x_{2}$ or it has $k+1$ covered 3's, i.e. stationary weight $x_1^{m_3-k-1}x_2^{k+1}$ and the transition has rate 
$x_{1}$. 
This time, the possibilities depend on the site to the left of the block in $\con$.
This is because a nontrivial ringing path transition at a site occupied by a non-covered 3 (after the transition) comes from a state with fewer covered 3's and a 
transition at a covered 3 has rate $x_1$ and comes from a state with the same weight.
To differentiate between incoming and outgoing transitions, we always place $\con$ on the left in the figures below.
\begin{enumerate}
\item[(A)] If the jumping particle is a 2, then the transition happens with rate $x_{2}$ and the number of covered 3's is unchanged.
\bew
\begin{array}{ccccc}
|\vac\; & \;\vac| & \cdot & \dots & \cdot\\
|\occ\; & \;\vac| & \vac & \dots & \vac\\
\hline
2 & 3 & 3 & \dots & 3 
\end{array}
\longleftarrow
\begin{array}{ccccc}
|\vac\; & \;\vac| & \cdot & \dots & \cdot\\
|\vac\; & \;\occ| & \vac & \dots & \vac\\
\hline
3 & 2 & 3 & \dots & 3 
\end{array}
\eew

\item[(B)] If the jumping particle is a 1, then the transition happens with rate $x_{1}$ and the number of covered 3's decreases by 1 (since we assumed the leftmost 3 of the block of $\con$ to be non-covered).
\bew
\begin{array}{ccccc}
|\vac/\occ\; & \;\vac| & \cdot & \dots & \cdot\\
|\occ\; & \;\vac| & \vac & \dots & \vac\\
\hline
1 & 3 & 3 & \dots & 3 
\end{array}
\longleftarrow
\begin{array}{ccccc}
|\vac/\occ\; & \;\vac| & \cdot & \dots & \cdot\\
|\vac\; & \;\occ| & \vac & \dots & \vac\\
\hline
3 & 1 & 3 & \dots & 3 
\end{array}
\eew
There is a subtle point here, which we should emphasize. Note that the configuration on the right has a $\vac$ at the first row in the column corresponding to 1. If there is a $\occ$ at that position, that configuration may also go to $\con$ with rate $x_{1}$. However, the number of covered 3's will not change, and the transition will have an effective rate of $x_{1}$.  
\end{enumerate}

We have thus shown that there is exactly one incoming transition with an effective rate of $x_{2}$ for every block with a leading non-covered 3. We now show that all other incoming transitions have effective rate $x_1$. 

If a non-trivial transition starts at a covered  3 then the effective rate is $x_1$ by the discussion before (A) above. If a 1 has jumped left over a 2 or a non-covered 3 the effective rate is clearly $x_1$. 
 The last possibility to consider is when a 1 has jumped over the 3, and the 3 is covered after
the jump, as shown  below. 
\bew
\begin{array}{ccccc}
|\vac/\occ & \vac/\occ| & \cdot   & \dots & \cdot\\
|\occ & \vac |  & \vac & \dots & \vac\\
\hline
1 &  3 & 3 & \dots & 3 
\end{array}
\longleftarrow
\begin{array}{cccccc}
|\vac/\occ & \vac/\occ | &  \cdot & \dots & \cdot\\
|\vac & \occ | & \vac & \dots & \vac\\
\hline
3 & 1 & 3 & \dots & 3 
\end{array}
\eew

This happens with rate $x_{1}$ and the 3 must have been covered both before and after
the jump so the effective rate is also $x_1$.
Note that the 3 may be covered by a bully path starting either directly above it or somewhere to the left outside this picture.

To complete the proof, we use that the total number of incoming transitions equals that of the outgoing transitions; see Theorem \ref{T:FMringing}(1).
\end{proof}

\section{Ferrari-Martin process with one first class particle} \label{S:FM1}
Our next result is for arbitrary $n$ with the condition $m_{1}=1$. 
We also fix the rates so that we only have one free parameter.
Recall that $z_{r,i}(\con)$ is the number of $\vac$'s on row $r$ of $\con$  that are $i$-covered and that $v_1$ is the
total number of $\vac$'s below row 1. Let $z_{1}(\con) = \sum_{r}z_{r,1}(\con)$.

\begin{theorem}\label{T:FM1}
In $\ofm_{m}$ with $m_{1}=1$ and $x_i=1$ for $2\le i\le n$, set the rate of the Ferrari-Martin transition from $\con$ at column $i$ to take place with rate $x_{1}$ when 
$\bp\con_{i}=1$. If $\bp\con_{i}=2,\dots,n$, then set the rate to be 1.
Then the stationary probability of $\con$ is $w(\con)=\frac{x_{1}^{v_1-z_{1}(\con)}}{Z_m}$, where $Z_m$ is the normalizing partition function. 
\end{theorem}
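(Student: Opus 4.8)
The plan is to verify that the proposed (unnormalised) weights $w(\con) = x_1^{v_1 - z_1(\con)}$ satisfy the master equation~\eqref{mastereq} at every state of $\ofm_m$; since the chain has the same transition graph as the homogeneous one, it is irreducible by \refT{T:FMringing}(2) (all rates are positive for $x_1>0$), so once the master equation holds, uniqueness of the stationary distribution identifies $w/Z_m$ as the stationary probability. I would work throughout with effective rates as in~\eqref{effrate}. Because $v_1$ depends only on the type $m$ and not on $\con$, the weight ratio across a transition $\con'\to\con$ is $w(\con')/w(\con)=x_1^{z_1(\con)-z_1(\con')}$, so the effective rate of an incoming transition is $\text{rate}(\con'\to\con)\,x_1^{z_1(\con)-z_1(\con')}$. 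The aim is to show that at each $\con$ the outgoing rates and the incoming effective rates have the same sum.

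I would first treat the outgoing side. As $m_1=1$ there is a unique first-class particle, so $\bp\con_i=1$ holds for exactly one column $i$; this is the only column whose ringing path transition can carry rate $x_1$, all other non-loop transitions having rate $1$. This distinguished transition is moreover always non-trivial: by \refT{T:FMbully}(3) it projects to the $\om_m$-move at sites $i-1,i$, and since $\bp\con_{i-1}\neq 1$ forces $\bp\con_{i-1}>1$, that move is a genuine swap, so the underlying ringing path transition cannot be a loop. Writing $T$ for the number of non-loop transitions at $\con$, the outgoing rates therefore sum to $x_1+(T-1)$.

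The core of the argument is the incoming side: I would show that the incoming effective rates also sum to $x_1+(T-1)$, by proving that exactly one incoming transition has effective rate $x_1$ while the remaining $T-1$ have effective rate $1$. This reduces to understanding how a ringing path transition changes $z_1(\con)$, the number of vacancies queued over by the unique class-$1$ bully path. I expect the key facts to be: (i) each transition changes $z_1$ by at most $1$; (ii) combining the change in $z_1$ with the rate ($x_1$ or $1$) leaves only the effective rates $1$ and $x_1$ possible, the a priori values $x_1^{2}$ and $x_1^{-1}$ being excluded; and (iii) exactly one incoming transition realises the value $x_1$, either a rate-$x_1$ transition that leaves $z_1$ unchanged or a rate-$1$ transition that raises $z_1$ by one. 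Together with \refT{T:FMringing}(1), which provides equally many incoming and outgoing non-loop transitions, this balances~\eqref{mastereq}. The accompanying lumping statement would then follow exactly as in \refT{T:FM3}, from \refT{T:FMbully}(3) and Definition~\ref{D:trans}: the projected move overtakes a class-$1$ particle with rate $x_1$ and does anything else with rate $1$, matching the inhomogeneous exclusion process on $\om_m$ at $x_2=\dots=x_n=1$.

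I expect item (iii), and the bookkeeping behind the lemma generally, to be the main obstacle. In contrast to the three-species case of \refT{T:FM3}, where the analysis localises to a single block of $3$'s in a two-row queue, here the class-$1$ bully path threads through all $n-1$ rows, queueing rightwards on each row until it meets an $\occ$. A ringing path transition can shift one of these landing $\occ$'s, and hence move the point at which the path enters the next row, so its effect on $z_1$ is a priori non-local and might cascade downwards. The crux is to show that the coherent, single-column structure of the ringing path constrains any such cascade to alter $z_1$ by at most one, and to pin down the unique incoming transition contributing the extra factor of $x_1$. I would attempt this by tracing the class-$1$ bully path and the ringing path together from the top row downwards and analysing the row at which they cross.
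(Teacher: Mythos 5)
Your framework coincides with the paper's own proof: unnormalised weights $x_1^{v_1-z_1(\con)}$, effective rates as in \eqref{effrate}, the outgoing sum $x_1+(T-1)$ (your justification via \refT{T:FMbully}(3) that the rate-$x_1$ transition at the class-$1$ column is never a loop is correct, and in fact more explicit than the paper), and the reduction of the master equation to the claim that exactly one incoming transition has effective rate $x_1$ while the remaining $T-1$ have effective rate $1$. But the proposal stops precisely where the content of the theorem begins: your items (i)--(iii) are offered as ``expected key facts,'' and you yourself flag (iii) and ``the bookkeeping behind the lemma'' as the main obstacle. Since those facts \emph{are} the theorem, what you have is a correct reformulation of what must be proved, not a proof.

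Concretely, two ideas are missing, and they are what the paper supplies. First, a crossing lemma: the ringing path and the class-$1$ bully path can only intersect at a single vacancy $\vac$ (in which case the bully path is unaffected) or along a vertical tower of $\occ$'s; in the latter case the transition shifts that entire vertical segment of the bully path one column to the left, which destroys one $1$-covered $\vac$ at the top of the segment and creates one at the bottom, so $z_1$ is exactly preserved. This is what eliminates the ``cascade'' you worry about: the effect is local to each crossing segment (even when the cylinder makes the two paths cross several times), and $z_1$ can change only when a crossing segment touches row $1$ or row $n-1$, where one of the two compensating vacancies is absent. That geometry yields exactly three boundary cases --- last row only (rate $x_1$, $z_1(\con')=z_1(\con)+1$, effective rate $1$), first row only (rate $1$, $z_1(\con')=z_1(\con)-1$, effective rate $x_1$), and both (rate $x_1$, $z_1$ unchanged, effective rate $x_1$) --- from which your (i) and the exclusion of effective rates $x_1^{2}$ and $x_1^{-1}$ in (ii) follow, but only from this lemma. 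Second, the uniqueness in (iii) is not obtained by tracing the two paths downward at all: the incoming transitions of effective rate $x_1$ are exactly those whose ringing path meets the bully path in the first row, and ``exactly one such exists'' comes from the inverse ringing-path construction of Ferrari and Martin (\refT{T:FMringing}(1), \cite[Prop.\ 3.3]{FM2}), which parametrises the incoming transitions by paths grown downward from positions in row $1$; precisely one of these engages the top of the class-$1$ bully path. Without these two ingredients your outline cannot be closed as written.
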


\begin{proof}
Let the probability of configuration $\con$ be given by $w({\con})$. Since there is a unique solution of the master equation, it suffices to show that 
$w({\con})=x_{1}^{-z_{1}(\con)}$ satisfies  \eqref{mastereq}.

By \refT{T:FMringing}(3), when all the rates are 1, there are as many incoming transitions into any configuration as outgoing ones, say $k$. Therefore, we need to compare the number of incoming transitions with an effective rate (see \eqref{effrate}) of $x_{1}$ with the number of outgoing transitions with rate $x_{1}$. Since there is only one first class particle, the only time we get an outgoing transition with rate 
$x_{1}$ is if the ringing path transition occurs at the unique site $i$ so that 
$\bp\con_{i}=1$. There is thus a contribution of $(x_{1}+k-1)w({\con})$ to the outgoing transitions from $\con$.

Now, let us look at incoming transitions to $\con$ in the master equation \eqref{mastereq}.
The key observation is illustrated by Figure~\ref{F:bullypath-one1class}, which is a cartoon for a configuration $\con'$ along with a generic ringing path transition at $j$ to $\con$.
\begin{figure}[h!]
\begin{center}
\includegraphics[height=4cm]{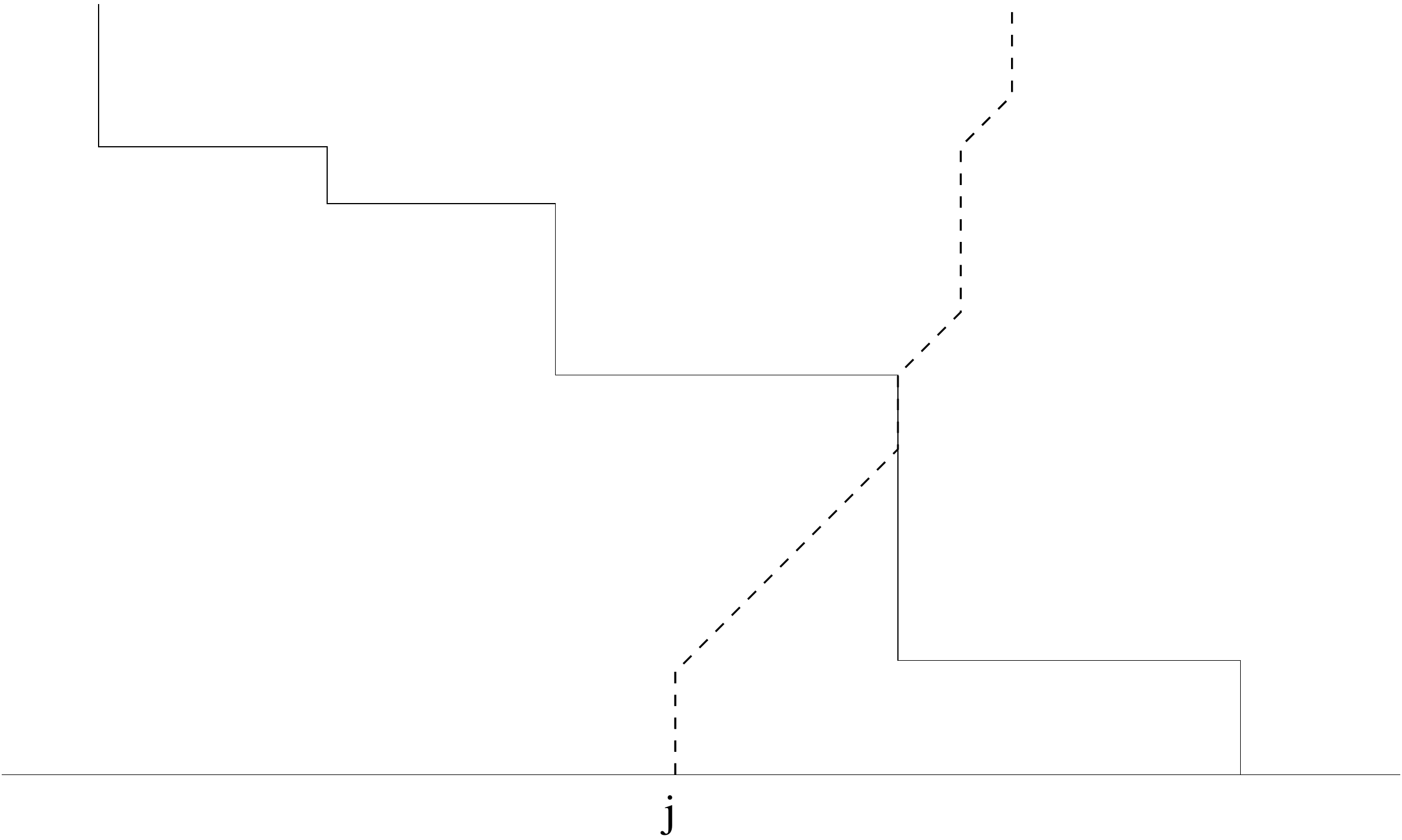}
\caption{The (dashed) ringing path starting at $j$ intersecting with the (solid) first class bully path.  This transition has rate 1.}
\label{F:bullypath-one1class}
\end{center}
\end{figure}

The only ringing transitions that affect the bully path are at the locations where both paths cross. They cross either in a single vacancy $\vac$ or in a vertical tower of $\occ$'s as shown in \refF{F:bullypath-one1class}. In the first case the transition does not change the bully path. 

When the transition takes place in the latter case, all the $\occ$'s at the intersection will move left, if possible. If any particular $\occ$ did not move, that means there is already a $\occ$ in the position to its left. Therefore, the vertical part of the bully path will shift by one to the left in $\con$. This means that one less $\vac$ is 1-covered at the top of the vertical part and one more $\vac$ is 1-covered at the bottom by the first class bully path. Note that the ringing path could intersect the bully path multiple times because these multiline queues live on a cylinder. But the argument given above holds true for every vertical segment of the intersection of the two separately.
Therefore, $z_{1}(\con') = z_{1}(\con)$, and this transition contributes $1\cdot w({\con'}) = w({\con})$ by assumption.

We have shown that a transition whose ringing path intersects the first class bully path somewhere in the middle does not change $z_{1}$ after the transition. The only possible way for the transition to affect $z_{1}$ is if the ringing path intersects the bully path either at the first row or the last row or both. As we saw above, it could intersect the bully path at other places, but they don't matter. We shall consider these three cases separately.
\begin{enumerate}
\item {\bf Last Row only:} In this case, the ringing path in $\con'$ starts at the site containing the $\occ$ of class 1, and the transition has rate $x_{1}$. By the argument above, $\con'$ has one more 1-covered $\vac$ than 
$\con$. Therefore this transition contributes $x_{1} \cdot w({\con'}) = w({\con})$ and has an effective rate of 1.

\item {\bf First Row only:} The transition has rate 1 since the ringing path in $\con'$ starts at a site not containing a 1. But this time, there is one less 1-covered $\vac$ in $\con'$ than $\con$ and therefore the contribution is $1 \cdot w({\con'}) = x_{1} w({\con})$, i.e. with effective rate $x_1$.

\item {\bf Both First and Last Rows:} The transition has rate $x_{1}$ since the ringing path in $\con'$ starts at the site containing 1. But this time, there is one less 1-covered $\vac$ at the top and one more at the bottom in $\con'$ compared to $\con$ and therefore the contribution is $x_{1} \cdot w({\con'}) = x_{1} w({\con})$.
\end{enumerate}

This shows that the only incoming transitions which have an effective rate of $x_{1}$ are those which intersect the first class bully path at the top. By the inverse of the ringing path construction mentioned in \refT{T:FMringing}(1), see \cite[Prop 3.3]{FM2}
there is exactly one such transition. 
Thus, there are $k-1$ incoming transitions with effective rate 1 and exactly one with rate $x_{1}$.
\end{proof}

In this case we may use the theorem to compute the partition function $Z_m$, i.e. the divisor for all stationary distributions to get a probability measure with the normalization we are using. 
Recall that we have $M_{r} = \sum_{i=1}^{r} m_{i}$.
 If we let $[k]_q:=\frac{1-q^k}{1-q}$, and denote by $[k]_q^{(d)}$ the $d$'th derivative of 
 $[k]_{q}$ with respect to $q$. Then 
 $[k]_q^{(d)}=d!\sum_{i=0}^{k-d-1}\binom{i+d}{i}q^i$.

 \begin{proposition}
In $\ofm_{m}$ with $m_{1}=1$, $x_{1}=a$ and $x_i=1$ for $2\le i\le n$, the partition function is
 \[ Z_{m}=N\prod_{r=2}^{n-1} \frac{ [N]_{a}^{(M_{r}-1)}}{(M_{r}-1)!}
 = N\prod_{r=2}^{n-1} h_{n-r}(1, \underbrace{a, \dots, a}_{r}),
 \]
 where $h_{k}$ is the complete homogeneous symmetric polynomial. 
 \end{proposition}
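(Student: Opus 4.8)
The plan is to read off the stationary weights from \refT{T:FM1} and sum them. With $x_1=a$ that theorem gives the unnormalized weight of a configuration as $a^{\,v_1-z_1(\con)}$, so
\[
Z_m=\sum_{\con\in\ofm_m} a^{\,v_1-z_1(\con)}=a^{\,v_1}\sum_{\con\in\ofm_m} a^{-z_1(\con)},
\]
where $v_1=\sum_{r=2}^{n-1}(N-M_r)$ is the (constant) number of $\vac$'s below row $1$. Because $m_1=1$ there is a single first-class bully path, and a $\vac$ is $1$-covered precisely when that path queues over it; hence $z_1(\con)=\sum_{r=2}^{n-1}g_r(\con)$, where $g_r(\con)$ is the length of the queue formed by the first-class particle on row $r$, i.e. the number of $\vac$'s between the column at which the path enters row $r$ and the first $\occ$ it meets (cyclically).

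The central step is to factor $\sum_{\con}a^{-z_1(\con)}$ over rows. Since the rows of a multiline queue are chosen independently, I would condition on the column $c$ at which the first-class path enters row $r$ and let $T_r(c)$ be the sum of $\prod_{s=r}^{n-1}a^{-g_s}$ over all choices of rows $r,\dots,n-1$. Rotating rows $r,\dots,n-1$ simultaneously is a weight-preserving bijection of the state space that shifts the entry column, so $T_r(c)$ is independent of $c$; writing $T_r$ for this common value yields $T_r=f_r(a)\,T_{r+1}$ with
\[
f_r(a)=\sum_{j=0}^{N-M_r} a^{-j}\binom{N-1-j}{M_r-1},
\]
the single-row sum (a queue of length $j$ forces columns $1,\dots,j$ to be $\vac$, column $j+1$ to be $\occ$, and the remaining $M_r-1$ occupied sites to be placed freely among the other $N-j-1$ columns). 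Row $1$ contributes only its $N$ positions for the lone $\occ$, giving $\sum_{\con}a^{-z_1(\con)}=N\prod_{r=2}^{n-1}f_r(a)$. The step requiring the most care — and the main obstacle — is justifying that the conditional sums genuinely decouple: that $T_r(c)$ is truly independent of $c$ and that no information beyond the entry column is transmitted between consecutive rows.

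It then remains to evaluate $f_r$. Reindexing by $k=N-M_r-j$ gives
\[
f_r(a)=a^{-(N-M_r)}\sum_{k=0}^{N-M_r}\binom{M_r-1+k}{M_r-1}a^{k}
=a^{-(N-M_r)}\,\frac{[N]_a^{(M_r-1)}}{(M_r-1)!},
\]
the last equality being exactly the derivative identity recorded just before the statement. Multiplying back the prefactor $a^{\,v_1}$ and using $v_1=\sum_{r=2}^{n-1}(N-M_r)$, every power of $a$ cancels and we obtain $Z_m=N\prod_{r=2}^{n-1}\frac{[N]_a^{(M_r-1)}}{(M_r-1)!}$, which is the first asserted form.

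For the symmetric-function form I would use the generating function
\[
\sum_{j\ge 0} h_j\bigl(1,\underbrace{a,\dots,a}_{M_r}\bigr)\,t^{j}=\frac{1}{(1-t)(1-at)^{M_r}},
\]
whose coefficient of $t^{\,N-M_r}$ is $\sum_{i=0}^{N-M_r}\binom{M_r-1+i}{M_r-1}a^{i}$; this matches the truncated series above and gives $\frac{[N]_a^{(M_r-1)}}{(M_r-1)!}=h_{N-M_r}\bigl(1,\underbrace{a,\dots,a}_{M_r}\bigr)$. In the genuine permutation case $N=n$ one has $M_r=r$ and $N-M_r=n-r$, so this reduces to $h_{n-r}(1,\underbrace{a,\dots,a}_{r})$ exactly as written.
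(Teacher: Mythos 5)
Your proof is correct and takes essentially the same route as the paper's: both decompose each multiline queue into the unique first-class bully path (a starting column plus per-row queue lengths $e_r=z_{r,1}$) together with free placements of the remaining $M_r-1$ particles in each row, yielding the same per-row factor $\sum_{j=0}^{N-M_r}\binom{N-1-j}{M_r-1}a^{-j}$, the same reindexing, and the same cancellation against $a^{v_1}$. Your rotation-invariance argument is simply a more careful justification of the row-by-row decoupling that the paper asserts in one sentence (``Since the 1 takes highest precedence in every queue this can be done independently for every row''), and your closing remark that the $h_{n-r}(1,a,\dots,a)$ form is the $N=n$, $M_r=r$ specialization correctly identifies how the stated symmetric-function expression should be read in general.
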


\begin{proof}
The latter equality is an immediate consequence of the definition of the complete homogeneous symmetric polynomials. 
We will prove the former inequality combinatorially by looking at the \mlqs. 

Since we have assumed that $m_1=1$ and $x_i=1$ for $i\ge 2$ we need to focus on the bully path of the particle 1. 
For $2\le r\le n-1$, let $e_r = z_{r,1}$. The $e_r$'s determine the bully path except for the starting position. 
For every fixed set $\{e_{2},\dots,e_{n-1}\}$ where
$0\le e_r\le N-M_r$, such a bully path can be completed to a multiline queue by choosing, for each row $r$,  the remaining $M_r-1$ positions of particles among the $N-e_r-1$ possible positions. 
Since the 1 takes  highest precendence in every queue this can be done 
independently for every row. 
Let $k:=\sum_{r=1}^n e_r$ and 
$Z_{m,k}:=\sum_{\ofm_m} a^k$. 
For every vacancy $\vac$ that the bully path goes through, 
the stationary weight gets a factor $a^{-1}$ by \refT{T:FM1}. The starting position of the bully path gives a factor $N$ and together we get 
\[
Z_{m,k}=N\cdot a^{v_1}\sum_{\substack{e_2,\dots,e_{n-1}\\ 0\le e_r\le N-M_{r}\\ k=\sum e_r}} 
\binom{N-e_2-1}{M_2-1}a^{-e_2}
\dots
\binom{N-e_{n-1}-1}{M_{n-1}-1}a^{-e_{n-1}}
\]
To get the partition function, we use $Z_{m}=\sum_k Z_{m,k}$. We get by standard manipulations the following formula. The easiest way to see the identity is to expand the product below, which gives all the possible terms above. 

 \[ Z_{m}=
 Na^{v_1} \prod_{r=2}^{n-1} \left(\binom{N-1}{M_{r}-1}+\binom{N-2}{M_{r}-1}a^{-1}+\dots+
 \binom{M_{r}-1}{M_{r}-1}a^{-(N-M_{r})}\right)
 \]

Using that $v_1=\sum_{i=2}^{n-1} (N-M_i)$,  we can rewrite the formula above as 

\[
  Z_{m}=Na^{v_1}\prod_{r=2}^{n-1} \sum_{i=0}^{N-M_{r}}\binom{N-i-1}{M_{r}-1}a^{-i}
  =N\prod_{r=2}^{n-1} \sum_{i=0}^{N-M_{r}}\binom{M_r+i-1}{M_{r}-1}a^{i},
\]
which can be seen to be the same as the desired formula. 
\end{proof}
 
 \section{A minimal Markov chain for $n=3$} \label{S:new}
 As mentioned after Conjecture \ref{C:main}, for $n\ge 4$ we do not think that there is a simple rule for transition 
 rates in general for the ringing path transitions in $\ofm_m$
 that would give each \mlq{} a monomial as stationary weight.  
 For example, when $N=n=4$, assuming all transition rates are of the form $x_{i}$ and all stationary weights are monomials, all master equations cannot be satisfied simultaneously.
 
 We have therefore sought other transitions between \mlqs{} to create other
 Markov chains for which we might be able to find appropriate rates. We will in this section present such a Markov chain $\oal$ for the case
  $n=3$. It gives the same stationary weights as \refT{T:FM3} and Conjecture \ref{C:main} and is thus an independent solution of the
  TASEP for $n=3$, parallel to the work by Angel \cite{A}. For $\oal$ we believe that it might be possible to generalize to higher $n$, but we have not yet been able to do so in general. We will continue to use the bully-path projection defined in 
Algorithm~\ref{A:bullypath} as the lumping procedure.
 
We call $\oal$ the {\bf Multiline coupe process}. 
It will be minimal in the sense that there will be no transitions between different configurations corresponding to the same permutation.

First we will need some definitions. In this section $m=(m_1,m_2,m_3)$. We will divide a word $w\in\om_m$ into pieces. We cut 
$w$ to the right of every 2 not followed by a 2 and to the right of every 1 that is not followed by a 1. Every subword thus obtained will start with a number (possibly zero)
of 3's followed by some consecutive 1's or consecutive 2's, see \refF{F:Coupe}. 
Each such subword will be called a {\bf coupe}.  
Each coupe contains exactly one among 1's or 2's and are naturally called {\bf first-class} and {\bf second-class} coupes.
We call a first-class or second-class coupe with no 3 {\bf full}.

\begin{figure}[htbp]
\begin{center}
\begin{tabular}{c |c c c | c c c c | c c c c| c | c | c c c c c | c | c}
 $\cdots$ &  3&3&2&3&3&1&1&3&2&2&2&1&2&3&3&1&1&1&2& $\cdots$
\end{tabular}
\end{center}
\caption{A decomposition of a circular word into coupes, seperated by vertical lines.}
\label{F:Coupe}
\end{figure}

The partition of a word into coupes induces a partition 
of each configuration corresponding to that 
word. 
In a coupe we will call (the position of)  the leftmost 1 (or 2) the {\bf front seat} and the rightmost letter in each coupe is called 
the {\bf back seat}.
It is easy to see that the only particle in each coupe that can jump is the front seat particle. Similarly, the only way to get to a given 
configuration is if the back seat just jumped. 

If in a configuration $\con$, a back or front seat has a $\occ$ in row 1, then it is said to be {\bf occupied}, if it has a $\vac$ in row 1, then it is said to be {\bf vacant}.
The front seat particle can always jump except when the coupe consists of only 2's. In that case, the back seat of the coupe to its left is a 1, and so the 2 cannot jump. Note that a 2 in a front or back seat must always be vacant.

\begin{definition}\label{D:jumprules}
The transitions in $\oal$ are given by the following rules.
\begin{itemize}
\item{} {\bf Regular jump} - If a 1 corresponding to an occupied front seat jumps then both the $\occ$'s  in row 1 and row 2 jumps to the left if possible. 
\item{} {\bf Pulling jump} - If a 1 or a 2 corresponding to a vacant front seat jumps then all $\occ$'s  in the first row and to the right of the jumping $\occ$ but in the same coupe also moves one step to the left. If the jumping
1 or 2 corresponding to a vacant front seat is also a back seat then all the $\occ$'s in the coupe to the right are moved one step to the left.
\end{itemize}
\end{definition}

Here are two examples of regular jumps:

\bew
\begin{split}
\begin{array}{ |c c  c c |c }
 \vac&\vac &\occ & \occ&\vac\\
  \vac& \vac& \occ & \occ&\vac\\
  \hline
  3&3&1&1&3
\end{array}
&\Rightarrow
\begin{array}{ |c c |c c |c}
 \vac&\occ &\vac &\occ &\vac\\
\vac  & \occ &  \vac& \occ&\vac\\
  \hline
  3&1&3&1&3 ,
\end{array}
\\
\begin{array}{ |c c c c|c }
 \vac&\occ &\occ & \vac&\vac\\
  \vac& \vac& \occ & \occ&\vac\\
  \hline
  3&3&1&1&3
\end{array}
&\Rightarrow
\begin{array}{ |c c |c c | c }
 \vac&\occ &\occ &\vac&\vac \\
\vac  & \occ &  \vac& \occ&\vac\\
  \hline
  3&1&3&1&3 ,
\end{array}
\end{split}
\eew

followed by two examples of pulling jumps

\bew
\begin{split}
\begin{array}{ |c c c c |c }
 \vac&\occ &\vac & \occ&\vac\\
  \vac& \vac& \occ & \occ&\vac\\
  \hline
  3&3&1&1&3
\end{array}
&\Rightarrow
\begin{array}{ |c c| c c |c}
 \vac&\occ &\occ &\vac &\vac\\
\vac  & \occ &  \vac& \occ&\vac\\
  \hline
  3&1&3&1&3 ,
\end{array}
\\
\begin{array}{ |c c c | c c c c|c }
 \vac&\vac &\vac & \occ&\vac&\vac&\occ&\vac\\
  \vac& \vac& \occ & \vac&\vac&\occ&\occ&\vac\\
  \hline
  3&3&2&3&3&1&1&3
\end{array}
&\Rightarrow
\begin{array}{ |c c |c  c c c c|c }
 \vac&\vac  & \occ&\vac&\vac&\occ&\vac&\vac\\
  \vac& \occ& \vac & \vac&\vac&\occ&\occ&\vac\\
  \hline
  3&2&3&3&3&1&1&3.
\end{array}
\end{split}
\eew

Note that a jump might increase the number of coupes by one, decrease it by one, or leave it unchanged.
Note also that after the pulling jump the coupe to the right will have a vacant back seat. 

The rate when a 1 (resp. 2) jumps is $x_1$ (resp. $x_2$)  since 
we want the Multiline coupe process to be lumpable with respect to the bully-path projection. As an example of the multiline coupe process we give $\oal_m$  when $m=(1,1,1)$ in \refF{F:mlq2eg}.

\begin{figure}[h]
\begin{center}
\begin{tikzpicture} [scale=0.88, >=triangle 45]
\draw (3,9) node 
{$
\begin{array}{c c c} \vac & \vac & \occ \\ \vac & \occ & \occ\\ \hline 3 & 2 & 1 \end{array}$
};
\draw (4.2,9.2) node [color=red] {$(x_{1})$};
\draw (6,7) node 
{
$\begin{array}{c c c} \vac & \occ & \vac  \\ \vac & \occ & \occ\\ \hline 3 & 1 & 2 \end{array}$
};
\draw (4.8,7.2) node [color=red] {$(x_{1})$};
\draw (9,7) node {
$\begin{array}{c c c} \occ & \vac & \vac \\ \vac & \occ & \occ\\ \hline 3 & 1 & 2 \end{array}$
};
\draw (9.2,7.9) node [color=red] {$(x_{2})$};
\draw (0,7) node 
{$
\begin{array}{c c c} \vac & \vac & \occ \\ \occ & \vac & \occ\\ \hline 2 & 3 & 1 \end{array}$
};
\draw (1.2,7.2) node [color=red] {$(x_{1})$};
\draw (-3,7) node 
{
$\begin{array}{c c c} \vac & \occ & \vac  \\ \occ & \vac & \occ\\ \hline 2 & 3 & 1 \end{array}$
};
\draw (-2.9,7.9) node [color=red] {$(x_{2})$};
\draw (6,2) node {
$\begin{array}{c c c} \occ & \vac & \vac \\ \occ & \vac & \occ\\ \hline 1 & 3 & 2 \end{array}$
};
\draw (7.2,2.2) node [color=red] {$(x_{1})$};
\draw (4.3,0) node 
{$
\begin{array}{c c c} \vac & \vac & \occ \\ \occ & \occ & \vac\\ \hline 1 & 2 & 3 \end{array}$
};
\draw (-1.2,2.2) node [color=red] {$(x_{1})$};
\draw (0,2) node 
{
$\begin{array}{c c c} \vac & \occ & \vac  \\ \occ & \occ & \vac\\ \hline 2 & 1 & 3 \end{array}$
};
\draw (0.5,0.2) node [color=red] {$(x_{1})$};
\draw (1.7,0) node {
$\begin{array}{c c c} \occ & \vac & \vac \\ \occ & \occ & \vac\\ \hline 1 & 2 & 3 \end{array}$
};
\draw (5.4,0.2) node [color=red] {$(x_{2})$};

\draw [->] (2,8.8) -- node [auto,swap] {$x_{2}$} (-2.6,7.7);
\draw [->] (4,8.8) -- node [auto] {$x_{1}$} (5.7,7.7);
\draw [->] (0,6.3) -- node [auto,swap] {$x_{1}$} (0,3);
\draw [<-] (1.6,0.7) -- node [auto] {$x_{1}$} (0,1.3);
\draw [<-] (4.2,0.7) -- node [auto,swap] {$x_{2}$} (6,1.3);
\draw [<-] (6,3) -- node [auto,swap] {$x_{1}$} (6,6.3);
\draw [->] (1.9,0.7) -- node [auto,very near end] {$x_{1}$} (2.9,8.3);
\draw [->] (5.7,2.9) -- node [auto,very near end,swap] {$x_{1}$}(0.3,6.3);
\draw [->] (0.3,2.9) -- node [auto,very near end] {$x_{2}$}(8.4,6.3);

\draw [->] (8.5,6.3) -- node [auto] {$x_{1}$} (6.3,2.9);
\draw [->] (-2.7,6.3) -- node [auto,swap] {$x_{1}$} (-0.3,2.9);
\draw [->] (4.1,0.7) -- node [auto,very near end, swap] {$x_{1}$} (3.2,8.3);

\end{tikzpicture}
\end{center}
\caption{All transitions and the corresponding rates for the inhomogeneous Multiline coupe process with m = (1, 1, 1). The stationary weights are given in parenthesis and in red.
Compare this with Figure~\ref{F:permeg} to see the lumping procedure.}
\label{F:mlq2eg}
\end{figure}
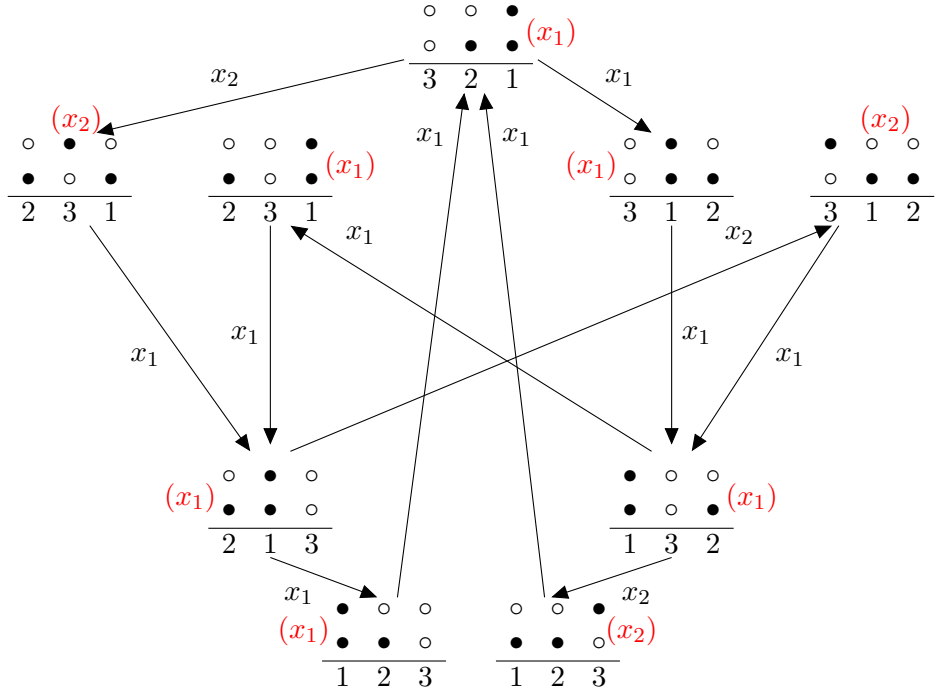

\begin{remark} The jump rules above could be thought of a sequence of ringing path transitions, but not in a straightforward way. It is possible that proofs or generalizations will be easier if viewed that way. Note also that row 1 will not behave as a TASEP on its own. It would be interesting to understand this process.
\end{remark}

The following lemma describes  the reverse, i.e. the incoming transitions to a given state. The verification of the Lemma is not difficult and we omit the details.

\begin{lemma}\label{L:inverse}
Let $\con$ be a configuration. Transitions leading to $\con$ in $\oal_m$ are determined by the back seats in the following way. 
\begin{romenumerate}
\item{} An occupied back seat can always have been the last 1-class particle to jump by a regular jump.
\item{} Any  back seat where the coupe to the right is not a full second-class coupe and has a vacant back seat could have made a transition via a pulling jump.
\end{romenumerate}
\end{lemma}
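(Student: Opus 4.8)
The plan is to prove the lemma by directly inverting the two jump rules of Definition~\ref{D:jumprules}, organized around the remark preceding the statement: in $\oal_m$ every move is performed by a front seat, and after the move the particle that jumped occupies a back seat of the resulting configuration. Hence, to list the incoming transitions of a fixed $\con$ it suffices to scan the back seats of $\con$ and decide, for each, whether it can be the particle that just jumped and by which of the two rules. I would set this up as an enumeration over back seats, treating the regular-jump and the pulling-jump reversals separately and then arguing that together they capture every preimage.

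For part (i) I would start from an occupied back seat, which is necessarily a $1$ since a $2$ in a seat is always vacant. The reversal of a regular jump returns this occupied $1$ to the front-seat position one step to its right: its row-$1$ $\occ$ is pushed back, the class label follows, and the particle sitting immediately to the right of the back seat moves back to the left. Because the back seat is a $1$ not followed by a $1$, the column to its right contains either a $3$ or a $2$, so exactly these two cases arise. The point that needs care, and the reason the statement holds \emph{unconditionally}, is that the row-$2$ $\occ$ of the jumping $1$ may have been blocked: when it jumps over a $2$ the row-$2$ $\occ$ stays put while only the row-$1$ $\occ$ and the class label shift, whereas over a $3$ the row-$2$ $\occ$ moves freely. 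I would check in both cases that pushing the occupied $1$ back produces a legitimate element of $\ofm_m$ whose forward image under the regular rule is exactly $\con$, which supplies the claimed incoming transition.

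For part (ii) I would analyze the pulling jump, using the observation recorded after Definition~\ref{D:jumprules} that such a jump leaves the coupe to the right of the jumper with a vacant back seat, having shifted that coupe's row-$1$ $\occ$'s one step to the left. Reversing amounts to sliding those row-$1$ $\occ$'s one step back to the right and restoring the jumper at the back seat $s$ on the left. I would show this is possible exactly when the right-hand coupe has a vacant back seat --- so that there is a free column into which an $\occ$ can be slid --- and is not a full second-class coupe --- which is precisely the condition guaranteeing that a row-$1$ $\occ$ exists to be slid, since a full second-class coupe is all $\vac$ in row $1$ and could not have been pulled. Verifying that these two conditions are simultaneously necessary and sufficient, and that the reversed move places the jumper at $s$, is the crux here.

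The final step is exhaustiveness: since every move terminates at a back seat and Definition~\ref{D:jumprules} offers only the two rules, every incoming transition to $\con$ is the reversal of a regular or a pulling jump and is therefore caught by (i) or (ii); I would also note that a single occupied back seat may simultaneously admit a regular-jump preimage via (i) and a pulling-jump preimage via (ii), and that these are genuinely distinct transitions. The main obstacle I anticipate is the bookkeeping at coupe boundaries: a jump can merge two coupes, split one, or leave the count unchanged, and the interplay between blocked row-$2$ $\occ$'s and the recomputation of the coupe decomposition must be tracked carefully so that no preimage is overlooked or double-counted. Once the local pictures for the two reversals are pinned down, these checks are routine, which is why the verification may reasonably be omitted from the statement.
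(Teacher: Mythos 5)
The paper itself offers no proof to compare against: it says ``The verification of the Lemma is not difficult and we omit the details.'' So your proposal must be judged on its own merits. Your skeleton is the right one --- every jump deposits the jumper at a back seat of the new configuration, so incoming transitions are enumerated by scanning back seats and inverting each of the two rules, then arguing exhaustiveness. But two of your local inversions are not correct as stated. The lesser issue is in part (i): your recipe ``its row-1 $\occ$ is pushed back'' breaks when the occupied back-seat $1$ at column $i$ is followed in $\con$ by a $3$ whose row-1 slot is also occupied. In that case the forward jump's row-1 move was \emph{blocked}, so the preimage keeps row 1 completely unchanged and returns only the row-2 $\occ$ from $i$ to $i+1$; pushing a row-1 $\occ$ back is impossible (the slot at $i+1$ is full). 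You flag blocking for the row-2 $\occ$ (jump over a $2$) but not for the row-1 $\occ$ (jump over a $3$), so the case analysis has three local pictures, not two. The claim of (i) still holds, but your stated construction fails in the third picture.

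The genuine gap is in part (ii), where you misidentify what the hypothesis ``not a full second-class coupe'' is for, and the equivalence you propose to prove is false in both directions. You claim the condition guarantees ``a row-1 $\occ$ exists to be slid,'' i.e.\ that the coupe to the right must contain something that was pulled. But a pulling jump is perfectly legal when \emph{nothing} is pulled: the rule moves all row-1 $\occ$'s of the relevant stretch, and that set may be empty. Concretely, if $\con$ contains $\dots 1\,|\,3\,2\,|\dots$ with the coupe $Y=32$ having no $\occ$ at all in row 1, the lemma correctly asserts a pulling preimage $\dots 3\,1\,|\,2\,|\dots$ (the vacant $1$ jumps over the $3$ and pulls nothing), yet your criterion would exclude it. The actual role of the hypothesis is different: in a pulling jump exactly one row-2 $\occ$ moves, namely the jumper's, so after the jump the column immediately to the jumper's right must be row-2 vacant, i.e.\ $Y$ must begin with a $3$. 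If $Y$ were a full second-class coupe, its first column would be a $2$, which carries an $\occ$ in row 2, so no preimage can exist. To match this with the lemma's phrasing you also need the separate observation (provable by a short bully-path counting argument) that a full \emph{first}-class coupe always has an occupied back seat, so full first-class coupes are already ruled out by the vacant-back-seat hypothesis; hence ``not full second-class $+$ vacant back seat'' is equivalent to ``$Y$ non-full $+$ vacant back seat,'' which is exactly the condition under which the slide-right reversal can be performed. Without this repair, the ``exactly when'' you set out to establish is wrong, and the count of incoming transitions ($o_1$ regular plus $v_1+v_2-f_2$ pulling) used in the proof of the theorem in Section 5 could not be recovered from your version of the lemma.
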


First we show that the coupe Markov chain is irreducible, i.e. that we can reach any state from any other state.
\begin{lemma}
For any $m=(m_1,m_2,m_3)$ the Multiline coupe Markov chain $\oal_m$ is irreducible.
\end{lemma}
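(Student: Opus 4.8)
The plan is to prove irreducibility by exhibiting, for each configuration $\con \in \oal_m$, a directed path in the coupe Markov chain to a single canonical configuration $\con_0$; since every step I use will be reversible in the sense of \refL{L:inverse}, this suffices to connect any two states via $\con_0$. A natural choice for $\con_0$ is the multiline queue projecting to the sorted word $1^{m_1}2^{m_2}3^{m_3}$, with all $\occ$'s packed as far left as possible in each row. The strategy is to show that from any $\con$ we can (i) move every first-class particle leftward until the 1's are all gathered, then (ii) move the second-class particles similarly, using that these jumps only ever decrease a suitable potential, so the process terminates at $\con_0$.

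The key steps, in order, would be the following. First I would introduce a monovariant—most naturally the total displacement $\Phi(\con)=\sum_{\text{particles}} \text{dist}(\text{particle to its target in } \con_0)$, or more robustly the number of coupes together with the positions of the 1's read cyclically from a fixed origin. Second, I would argue that whenever $\con \neq \con_0$ there is at least one front-seat 1 (or, if no 1 can usefully move, a front-seat 2) whose regular or pulling jump strictly decreases $\Phi$; here I would lean on the observation already recorded in the text that the front-seat particle of a coupe can always jump except when the coupe consists only of 2's, and that a full second-class coupe is precisely the blocked case. Third, I would verify that each such jump is a legal transition of $\oal_m$ with positive rate ($x_1$ or $x_2$), so the path I construct genuinely lives in the chain. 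Finally, since $\Phi$ is a nonnegative integer strictly decreasing along the path, the path is finite and reaches $\con_0$, giving a route $\con \to \con_0$; combined with the analogous route $\con' \to \con_0$ and the reversibility from \refL{L:inverse}, every pair of states communicates.

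The main obstacle I anticipate is the interaction between the two species during the sorting: a pulling jump can move whole blocks of $\occ$'s and can change the number of coupes, so a naive ``displacement'' potential may not decrease monotonically when second-class coupes are reshuffled. The careful part is therefore choosing a potential that is simultaneously reduced by the moves available for 1's and for 2's, or else performing the sorting in two cleanly separated phases (first fully sort the 1's, proving the 2-moves used in phase two never disturb the already-sorted 1's). I expect this phasing argument—showing that once the first-class particles reach their canonical positions, subsequent second-class jumps can be chosen to leave them fixed—to be the technical heart of the proof, while the remaining bookkeeping (legality of jumps, finiteness, and the appeal to \refL{L:inverse} for reversibility) is routine.
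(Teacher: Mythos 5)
There is a genuine gap, and it sits at the very first step of your plan: you assert that because each move you use is ``reversible in the sense of \refL{L:inverse}'', a path $\con \to \con_0$ together with a path $\con' \to \con_0$ connects $\con$ to $\con'$. But \refL{L:inverse} does not say that transitions of $\oal_m$ can be undone; it only \emph{characterizes} which states have transitions \emph{into} a given state. The Multiline coupe chain, like the TASEP it lumps to, is totally asymmetric: a front-seat 1 or 2 jumps left over a 3, and there is never a transition in which the 3 jumps back. So the transition graph is genuinely directed, and showing that every state can reach a single canonical $\con_0$ proves only one half of irreducibility; you still need directed paths \emph{from} $\con_0$ \emph{to} every state, and your potential/monovariant argument gives no handle on that direction (a strictly decreasing potential in fact shows you can never return by the same moves).

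The paper's proof supplies exactly the missing half, and its structure explains a second point your proposal overlooks. Forward direction: from any $\con$ one can reach \emph{every} cyclic shift of $\con_0$ (where $\con_0$ corresponds to the word $3^{m_3}2^{m_2}1^{m_1}$), by first gathering the 1's behind a 2 and then rotating blocks around the ring. Backward direction: starting from an arbitrary $\con$, one applies the \emph{inverse} jumps of \refL{L:inverse} (which are not transitions of the chain, but descriptions of predecessors) to work backwards to \emph{some} cyclic shift of $\con_0$; read in reverse, this is a legitimate forward path from that shift to $\con$. The two halves then concatenate precisely because the forward half reaches every cyclic shift, matching whichever shift the backward half lands on. Your plan fixes a single target configuration and never mentions rotations, so even if you replaced the reversibility claim by a genuine backward argument, you would still need this ``all cyclic shifts'' strengthening (or a quotienting by rotation) to close the loop. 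The sorting-by-phases idea in your forward construction is fine and close to the paper's; the proof, however, cannot be completed without an explicit argument for paths out of the canonical state.
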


\begin{proof} Let $w_0$ be the word starting with all the 3's, then all 2's and then all 1's and let $\con_{0}$ denote the unique \mlq{}
 corresponding to that word. First we prove that we can get from any 
state $\con$ to every cyclic shift of $\con_0$.  
Begin by moving all the 1's left until they are all directly 
to the right of a 2. Then every 1 will have $\occ$'s in both rows above it. Next, jump with the 2 and 
then all 1's to get to the next 2 and so on until all 2's are together. Then again jump with all the 2's and then all 1's performing
a rotation until the desired cyclic shift of $\con_0$ is obtained.

Conversely, we will show that we can get from some cyclic shift of $\con_0$ to any state $\con$. Equivalently, we show that we  can perform inverse jumps from any state 
$\con$ as described by \refL{L:inverse}, and reach some cyclic shift of $\con_0$.

We begin by locating a first-class coupe $X$  to the left of a second-class coupe $Y$. 
If $X$  has a vacant back seat we do inverse jumps (ii) with the back seat in the coupe to the left of $X$ 
until the back seat of $X$ is occupied. 
Then do regular inverse jumps (i) with the back seat of $X$ to the right of all the 2's in $Y$. 
Now, if there are a still a first class coupe to the left of $Y$ we repeat this procedure, moving a new 1 to the right of $Y$. 
If there are a second class coupe to the left of $Y$ we merge the two second class coupes into one by 
 inverse jumps of type (ii). Continuing this process will give us some cyclic shift of 
 $\con_0$.
\end{proof}

\begin{theorem} For any $m=(m_1,m_2,m_3)$, the jumping rules defined above form a Markov chain $\oal_m$ in which configurations have the stationary weights of \refT{T:FM3}, i.e $w(\con)=x_1^{m_3-k}x_2^k$, where $k$ is the number of covered 3's in $\con$. Moreover, this Markov chain lumps to the inhomogeneous TASEP on $\om_m$. 
\end{theorem}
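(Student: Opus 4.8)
The plan is to proceed exactly in the style of the proof of \refT{T:FM3}: the preceding lemma shows that $\oal_m$ is irreducible, so its stationary distribution is unique up to scaling, and it suffices to verify that the proposed weights $w(\con)=x_1^{m_3-k}x_2^k$, with $k=k(\con)$ the number of covered $3$'s, satisfy the master equation \eqref{mastereq} at every $\con$, and separately that the chain lumps under $\bp$. I will treat the lumping first and the master equation second.

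For the lumping statement I would show that the jump at the front seat in column $i$ projects under $\bp$ to exactly the $\om_m$-transition transposing columns $i-1$ and $i$, performed with the matching rate. By Definition~\ref{D:jumprules} a $1$ (resp.\ a $2$) jumps with rate $x_1$ (resp.\ $x_2$), which is precisely $x_{\bp\con_i}$, the rate prescribed by \eqref{rates} for moving the particle in column $i$ left; and one checks that whenever $\bp\con_{i-1}>\bp\con_i$ the column $i$ is indeed a front seat and conversely, so every admissible $\om_m$-move is produced by the jump at one uniquely determined front seat. The substantive point is that although a pulling jump displaces several $\occ$'s simultaneously, its net effect on $\bp\con$ is the single transposition of columns $i-1,i$; this I would verify directly from the queueing description of the bully paths, the extra $\occ$'s sliding left merely rerouting paths without reclassifying any particle other than the jumping one and the one it passes. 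Since for a fixed $\pi\in\om_m$ the front seats, their jump rates, and the target words $\bp\eta$ are all determined by $\pi$ alone, the total rate from $\con$ into any $\bp$-fiber depends only on $\pi=\bp\con$, which is the lumpability criterion; the projected chain is then the inhomogeneous TASEP on $\om_m$.

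For the master equation I would compare outgoing rates with incoming effective rates as in \eqref{effrate}. The outgoing side is immediate: the only movable particles are front seats, so the total outgoing rate from $\con$ is $(Fx_1+Sx_2)\,w(\con)$, where $F$ is the number of first-class coupes (a front-seat $1$ always jumps) and $S$ is the number of second-class coupes having at least one leading $3$ (the only movable front-seat $2$'s). The incoming side I would enumerate through \refL{L:inverse}, where each predecessor $\con'$ is indexed by a back seat: an occupied one reverses a regular jump (forward rate $x_1$), and one whose right-neighbour coupe is not a full second-class coupe and has a vacant back seat reverses a pulling jump (forward rate $x_1$ or $x_2$). Writing $k'=k(\con')$, the effective rate of $\con'\to\con$ is $\mathrm{rate}\cdot(x_2/x_1)^{\,k'-k}$, so an incoming transition contributes effective rate $x_2$ exactly when it has rate $x_2$ with $k'=k$ or rate $x_1$ with $k'=k+1$, and effective rate $x_1$ exactly when it has rate $x_1$ with $k'=k$ or rate $x_2$ with $k'=k-1$. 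The goal, in the spirit of the block-of-$3$'s analysis of \refT{T:FM3}, is to show that the incoming transitions of effective rate $x_2$ are in bijection with the $S$ movable second-class coupes and those of effective rate $x_1$ with the $F$ first-class coupes, so that the incoming effective rates sum to $Fx_1+Sx_2$ and balance the outgoing side.

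The crux, and the step I expect to be hardest, is the bookkeeping of the covered-$3$ count under the coupe moves, i.e.\ determining $k'-k$ for each predecessor in \refL{L:inverse}. A regular jump only slides the two $\occ$'s above an occupied front seat, and I expect to prove it leaves $k$ invariant (so its effective rate is $x_1$); checking examples such as those following Definition~\ref{D:jumprules} confirms this. The delicate case is the pulling jump, whose cascade of sliding row-$1$ occupations---possibly spilling into the coupe to the right through the back-seat clause of Definition~\ref{D:jumprules}---is exactly what reroutes a covering class-$1$ bully path and thereby raises or lowers $k$ by one, converting a rate-$x_1$ or rate-$x_2$ transition into effective rate $x_2$ or $x_1$; for instance the second pulling-jump example raises $k$ from $2$ to $3$ while a $2$ jumps, giving effective rate $x_1$. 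Pinning down these increments at the coupe boundaries---when the jumping particle is also a back seat, when the adjacent coupe acquires a vacant back seat, and when the covering path wraps around the cylinder---is the real work, and it is there that I would concentrate before assembling the two bijections of the previous paragraph.
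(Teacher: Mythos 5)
Your outline is the paper's own proof in structure: uniqueness of the stationary solution plus verification of the master equation \eqref{mastereq}, the outgoing total $Fx_1+Sx_2$ (the paper's $c_1x_1+e_2x_2$), enumeration of predecessors through \refL{L:inverse}, and the two matchings you aim for --- incoming transitions of effective rate $x_1$ with first-class coupes, and of effective rate $x_2$ with non-full second-class coupes --- are exactly the ones the paper assembles ($o_1$ regular plus $v_1$ pulling jumps for rate $x_1$, and $c_2-f_2$ pulling jumps for rate $x_2$, in its notation). The lumping argument you sketch is likewise the one the paper records in a sentence at the end.

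The genuine gap is that you stop at precisely the point where the proof has content. You reduce everything to computing $k'-k$ for each predecessor given by \refL{L:inverse}, and then defer this as ``the real work,'' checking it only on the two displayed examples. The paper carries out this work as a four-case analysis of the incoming pulling jumps, indexed by the class of the jumping back seat and the class of the coupe to its right in $\con$: a 1 whose right-neighbour coupe is first-class comes with rate $x_1$ and $k'=k$; a 2 whose right-neighbour coupe is first-class comes with rate $x_2$ and $k'=k-1$ (its pull shifts the row-1 $\occ$'s of that coupe one step left, so one more row-2 $\vac$ becomes 1-covered); a 1 whose right-neighbour coupe is a non-full second-class coupe comes with rate $x_1$ and $k'=k+1$ (the 3 it jumped over was 1-covered before the jump but not after, since a class-1 bully path cannot continue past the 2's of that coupe); a 2 whose right-neighbour coupe is a non-full second-class coupe comes with rate $x_2$ and $k'=k$. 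Together with the fact that regular jumps never change $k$ --- which you assert from examples but which also needs an argument (the two $\occ$'s above an occupied front seat move in tandem, so no row-2 $\vac$ changes its covering status) --- this yields effective rate $x_1$ or $x_2$ according to the class of the right-neighbour coupe, which is exactly what your two bijections require. These increments are the substance of the theorem; without them the proposal is a correct plan rather than a proof, and with them filled in it coincides with the paper's argument.
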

\begin{proof}
Let $\con$ be a configuration of $\oal_m$ with stationary weight $w(\con)$ and which is divided into $c$ coupes. We have to verify that the master equation \eqref{mastereq} is satisfied for $\con$. We will now show that the number of incoming transitions equals the number of outgoing ones. For this purpose, it will be convenient to distinguish the coupes. Let $c_{1}$ (resp. $c_{2}$) be the number of first-class (resp. second-class) coupes. Similarly, let $f_{1}$ (resp. $f_{2}$) be the number of full first-class (resp. full second-class) coupes, and $e_{i} = c_{i}-f_{i}$. Further, let $o_{1}$ be the number of first-class coupes whose back seat is occupied, and $v_{1}$ (resp. $v_{2}$) be the number of first-class (resp. second-class) coupes whose back seat is vacant. Note that $c_{2}=v_{2}$.

In each first-class coupe exactly one particle can jump, namely the front seat. In the second-class coupes the front seat can jump if and only if it is not full. 
Thus, the total number of transitions going out of $\con$ is equal to 
$c_{1}+e_{2}$.
By \refL{L:inverse} (i) there is exactly one transition coming in  for each occupied back seat by a regular jump. There are $o_{1}$ of these. 
By \refL{L:inverse} (ii), there is also one transition coming in to 
$\con$ for each coupe unless the coupe to the right has an occupied back seat or is a full second-class coupe. This is given by $v_{1}+v_{2}-f_{2}$, and so the number of transitions coming in to $\con$ is the same as the number of transitions leaving.

We must now also check the weights. For each second-class coupe there is one transition leaving with rate $x_2$ if and only if 
the second-class coupe is not full. Each first-class coupe gives an outgoing transition with rate $x_1$.  
So the outgoing side of the master equation is 
$c_{1}x_1 \cdot w(\con)+e_{2}x_2 \cdot w(\con)$. 

Now to the incoming side of the master equation \eqref{mastereq}. We will again
use the notion of effective rate (see \eqref{effrate}). 
Any incoming transition corresponding to a regular jump has weight $x_1$ and 
the number of 1-covered $\vac$'s on row 2 has not changed so it will come from a configuration with the same stationary weight. 
The total incoming contribution from regular jumps is thus $o_{1}x_1 \cdot w(\con)$. 
For the incoming pulling jumps there are several cases to consider: 

I. A back seat in a first class coupe with a first-class coupe to the right with a vacant back seat. By \refL{L:inverse} (ii) this corresponds 
to an incoming transition with weight $x_1$. Because of the definition of a pulling jump, the number of 1-covered $\vac$'s in row 2 do not 
change (either it is covered both before and after, or neither before nor after), so the transition comes from a configuration with the same 
stationary distribution and thus contributes $x_1w(\con)$. The effective rate is thus $x_1$.

II. A back seat in a second class coupe with a first-class coupe to the right with a vacant back seat. This corresponds to an incoming transition with weight $x_2$. Before the jump there was one less 1-covered $\vac$'s in row 2 in the first class coupe and thus the weight of the previous configuration was $w(\con)\cdot \frac{x_1}{x_2}$. The contribution from this transition is 
$x_2\cdot w(\con) \frac{x_1}{x_2}=x_1w(\con)$. 
The effective rate is thus $x_1$.

III. A back seat in a first-class coupe at position $i$ with a second-class coupe to the right that is not full. By 
\refL{L:inverse} (ii) this corresponds to an incoming edge with weight $x_1$. This time the number of 1-covered 
$\vac$'s in row 2 does change. Namely the $\vac$ in row 2 at position $i+1$ is not 1-covered in $\con$,
but was 1-covered in the previous configuration, which thus had stationary weight $w(\con)\cdot \frac{x_2}{x_1}$.
The contribution for this edge is thus $x_1\cdot w(\con) \frac{x_2}{x_1}=x_2w(\con)$. 

IV. A back seat in a second-class coupe that has another (not full) second-class coupe to the right. This 
incoming transition 
will have weight $x_2$ and come from a configuration with stationary distribution $w(\con)$ since the
number of 1-covered $\vac$'s has not changed. Hence contribution $x_2w(\con)$.

The number of transitions with effective rate $x_1$ (cases I and II) is $v_1$ and with effective rate $x_2$ (cases III and IV) 
is $c_{2}-f_{2}$.
By \refL{L:inverse} these are all possibilities, and it is easy to check that the master equation is satisfied for $\con$.

Finally, we must prove that $\bp$ lumps $\oal_m$ to the inhomogeneous multispecies exclusion process on $\om_m$. 
To see this it suffices to note that for every $\con\in\oal_m$ with
$\bp(\con)=w$ where $w$ has a transition to some other word $w'$, there is a unique transition from $\con$ to $\con'$ in $\oal_m$ such that $\bp(\con')=w'$ and this is clearly true by the definition of the jump rules. 
\end{proof}

\end{document}